\def\Box{\vcenter{\vbox{\hrule\hbox{\vrule
				\vbox to 8.8pt{\hbox to 10pt{}\vfill}\vrule}\hrule}}}
\newtheorem{thm}{Theorem}[section]
\newtheorem{lemma}[thm]{Lemma}
\newtheorem{corollary}[thm]{Corollary}
\newtheorem{prop}[thm]{Proposition}
\numberwithin{equation}{section}
\newtheorem{hypothesis}[thm]{Hypothesis}
\newcommand{\cP}{\mathcal P}
\definecolor{Purple}{rgb}{0.5,0,0.5}
\def\a{\alpha}
\newcommand{\Gg}{\Gamma}
\begin{document}
	\newcommand{\stopthm}{\begin{flushright}
			\(\box \;\;\;\;\;\;\;\;\;\; \)
	\end{flushright}}
	\newcommand{\symfont}{\fam \mathfam}
	
	\title{Flag-transitive point-primitive quasi-symmetric $2$-designs with block intersection numbers $0$ and $y\leq10$}
	
	\date{}
	
	\author[add1]{Jianbing Lu\corref{cor1}}\ead{jianbinglu@nudt.edu.cn}\cortext[cor1]{Corresponding author}
	\author[add2]{Yu Zhuang}\ead{22135044@zju.edu.cn}
	\address[add1]{Department of Mathematics, National University of Defense Technology, Changsha 410073, China}
	\address[add2]{School of Mathematical Sciences, Zhejiang University, Hangzhou 310027, Zhejiang, P.R.  China}
	
	\begin{abstract}
		In this paper, we show that for a non-trivial quasi-symmetric $2$-design $\mathcal{D}$ with two block intersection numbers $x=0$ and $2\leq y\leq10$, if $G\leq \mathrm{Aut}(\mathcal{D})$ is flag-transitive and point-primitive, then $G$ is either of  affine type or almost simple type. Moreover,  we prove that the socle of $G$ cannot be an  alternating group. If the socle of $G$ is a sporadic group, then $\mathcal{D}$ and $G$ must be one of the following: $\mathcal{D}$ is a $2$-$(12,6,5)$ design with block intersection numbers $0,3$  and $G=\mathrm{M}_{11}$, or $\mathcal{D}$ is a $2$-$(22,6,5)$ design with block intersection numbers  $0,2$ and $G=\mathrm{M}_{22}$ or $\mathrm{M}_{22}:2$.
		\newline
		
		\noindent\text{Keywords:} quasi-symmetric $2$-design; flag-transitive; point-primitive; automorphism group
		
		\noindent\text{Mathematics Subject Classification (2020)}: 05B05 20B15 20B25
	\end{abstract}	
	
	\maketitle
	
	\section{Introduction}\label{introduction}
	
	A $2$-$(v,k,\lambda)$ design $\mathcal{D}=(\mathcal{P},\mathcal{B})$ is  a finite incidence structure  with a set $\mathcal{P}$ of $v$ points and a set $\mathcal{B}$ of blocks such that each block contains $k$ points and each two points are contained in  $\lambda$ blocks. The \emph{replication number} $r$ of $\mathcal{D}$ is the number of blocks containing a given point. The number of blocks is conventionally denoted $b$. If $b=v$, we say that $\mathcal{D}$ is \emph{symmetric}. It is \emph{non-trivial} if $2<k<v-1$. All the $2$-designs in this paper are assumed to be non-trivial. An \emph{automorphism} of $\mathcal{D}$ is a permutation of the points which preserves the blocks. We write $\mathrm
	{Aut}(\mathcal{D})$ for the full automorphism group of $\mathcal{D}$, and call its subgroups as automorphism groups. A \emph{flag} of  $\mathcal{D}$ is an incident point-block pair. We say that an automorphism group $G$ is \emph{flag-transitive} if it acts transitively on the flags of $\mathcal{D}$.   The group $G$ is said to be  \emph{point-primitive} if $G$ acts  primitively  on  $\mathcal{P}$. 
	
	Suppose  $\mathcal{D}$ is a $2$-$(v,k,\lambda)$ design with  blocks $B_1,B_2,\cdots,B_b$. The cardinality $|B_i\cap B_j|$ is called a \emph{block intersection number} of $\mathcal{D}$. A symmetric $2$-design  has only one block intersection number,   namely $\lambda$.  As a slight generalization of symmetric $2$-designs,  $2$-designs with   two block intersection numbers  are called \emph{quasi-symmetric}. This concept  goes back to \cite{Shrikhande1952}. Let $x$, $y$ denote the two block intersection numbers of a quasi-symmetric design  with the standard convention that $x\leq y$. We say that quasi-symmetric design is proper if $x<y$ and improper if $x=y$. Any linear space with $b>v$ is a quasi-symmetric design with $x=0$ and $y=1$. We refer to \cite{Shrikhande} for more details about quasi-symmetric designs. 
	
	The problem of classifying quasi-symmetric $2$-designs appears to  be very
	difficult, even for the case $x=0$, cf. \cite{Neumaier1982,McDonough1995,Mavron1989}. 
	  However, there have been extensive works on the classification of linear spaces and symmetric $2$-designs, which are close to the quasi-symmetric $2$-designs. Suppose that $\mathcal{L}$ is  a linear space with a flag-transitive automorphism group $G$. According to \cite{Higman1969} $G$ is point-primitive, and by \cite{Buekenhout1988} $G$ is either almost simple or of affine type. Finally, all flag-transitive linear spaces apart from those with a one-dimensional affine automorphism group were classified in  \cite{Buekenhout1990}. Through a series of papers \cite{O'Reilly2005,O'Reilly2005_2,O'Reilly2007,O'Reilly2008},  Regueiro gave the similar classification  of symmetric biplanes ($\lambda=2$). A flag-transitive and point-primitive automorphism group $G$ of a symmetric design with $\lambda\leq100$ must be of almost simple or of affine type \cite{Tian2013}.  In the case where $G$ is imprimitive,  all symmetric designs for $\lambda\leq10$ are determined in \cite{Praeger2006,Mandi}. More recent and interesting classification results are provided in \cite{Li2024, Montinaro2023, Montinaro2024}.  There are few works on flag-transitive  quasi-symmetric $2$-designs with $x=0$ and $y>1$. The only result in the current study known to the authors, is that of Zhang and Zhou in \cite{Zhang2023}. They proved that if $\mathcal{D}$ is a quasi-symmetric design with $x=0$ and $y=2$ satisfying that $\lambda$ does not divide $r$ and $G \leq \mathrm{Aut}(\mathcal{D})$ is flag-transitive, then $G$ is point-primitive. Furthermore,  $G$ is of affine or almost simple type. Based on this conclusion, they also showed that  if $G$ is an almost simple group with sporadic socle, then  $\mathcal{D}$ is the unique $2$-$(22,6,5)$ design  and $G=\mathrm{M}_{22}$ or $\mathrm{M}_{22}:2$. The purpose of the present paper is to generalize the results in \cite{Zhang2023} by using similar methods. The following are our main results.
	
	\begin{thm}\label{main}
		Let $\mathcal{D}$ be a non-trivial quasi-symmetric $2$-design with block intersection numbers $x=0$ and $y\leq10$. If $G\leq\mathrm{Aut}(\mathcal{D})$ is flag-transitive and point-primitive, then $G$ must be either of affine type or almost simple type.
	\end{thm}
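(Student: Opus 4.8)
The plan is to feed point-primitivity into the O'Nan--Scott theorem and then discard every type except the affine (HA) and almost simple (AS) ones. Concretely, I must eliminate the simple diagonal (SD), compound diagonal (CD), product action (PA), twisted wreath (TW) and the two holomorph (HS, HC) types; in all of these $\mathrm{soc}(G)=T^{\ell}$ for a nonabelian simple group $T$, with $\ell\ge 2$ in every case except HS, where $\ell=1$ and the degree is $v=|T|$. Throughout I keep the identities $bk=vr$ and $\lambda(v-1)=r(k-1)$, which fix $r=\lambda(v-1)/(k-1)$.

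Two arithmetic inputs drive the argument. The first is the standard flag-transitive divisibility: since $G_{\alpha}$ is transitive on the $r$ blocks through $\alpha$, every such block meets a fixed $G_{\alpha}$-orbit $\Delta$ (of length $d$) in the same number $c$ of points, so a double count of incident pairs gives $rc=\lambda d$ and hence $r\mid\lambda d$, in particular $\lambda d\ge r$, for every nontrivial subdegree $d$. The second comes from $x=0$: for a block $B$ and an external point $p$, each $q\in B$ lies with $p$ on exactly $\lambda$ blocks, and every such block meets $B$ in precisely $y$ points (it contains $q\in B$ and $x=0$), so the number of blocks through $p$ meeting $B$ equals $k\lambda/y\le r$. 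Substituting $r=\lambda(v-1)/(k-1)$ and cancelling $\lambda$ gives $k(k-1)\le y(v-1)$, hence $k-1\le\sqrt{y(v-1)}$. Combining the two inputs, every nontrivial subdegree satisfies
\[
d\ \ge\ \frac{r}{\lambda}\ =\ \frac{v-1}{k-1}\ \ge\ \sqrt{\frac{v-1}{y}}\ \ge\ \sqrt{\frac{v-1}{10}}.
\]

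With this lower bound on the subdegrees the large cases collapse quickly. In the diagonal and holomorph types SD and HS the point stabiliser contains a full diagonal, so (for $\ell=2$) the nontrivial subdegrees are the sizes of the nontrivial conjugacy classes of $T$, and for larger $\ell$ they are built from these; since the minimal class size of $T$ is $o(\sqrt{|T|})$ while $v=|T|^{\ell-1}$, the bound $d\ge\sqrt{(v-1)/10}$ is violated for all but finitely many $T$. The same mechanism disposes of CD, TW and the generic PA configurations: there $v=m^{\ell}$ with $\ell\ge 2$ and $G\le H\wr\mathrm{S}_{\ell}$ acts in product action, and the orbits that move a single coordinate inside a component orbit yield a subdegree of size roughly $\ell$ times a subdegree of $H$ on its $m$ points, which is far below $\sqrt{m^{\ell}/10}$ once $\ell\ge 3$. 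The finitely many remaining $(T,\ell,m)$ are then ruled out by reinserting the exact divisibility $r\mid\lambda d$ (which forces $r$ to divide $\lambda\gcd$ of the subdegrees, hence $r/\gcd(r,\lambda)\mid v-1$) together with the integrality of $k$, $r$, $b$ and the eigenvalue constraints of the quasi-symmetric block graph.

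I expect the product action case, and specifically $\ell=2$, to be the genuine obstacle, because there the naive subdegrees are of size about $2m$ while the bound only demands $d\ge\sqrt{(m^{2}-1)/10}\approx m/3$, so the subdegree inequality alone does not close. For these configurations one has to use the divisibility conditions simultaneously across all subdegrees --- typically $r\mid\lambda(m-1)\gcd(2,m-1)$ paired with $r>\lambda(m^{2}-1)/(\sqrt{10}\,m)$ --- together with the quasi-symmetric integrality relations, reducing to a short, explicit list of parameters in $m$ that is eliminated by hand (using $2\le y\le 10$). Once SD, CD, PA, TW, HS and HC are all excluded, only the affine and almost simple types survive, proving the theorem.
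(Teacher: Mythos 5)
Your two arithmetic inputs are correct and are essentially the paper's: $r\mid\lambda d$ for every nontrivial subdegree is Lemma 2.2(iii) there, and your counting bound $k\lambda/y\le r$, which gives $k(k-1)\le y(v-1)$ and hence $d\ge\sqrt{(v-1)/10}$, is an equivalent (in fact marginally sharper) form of the paper's $v\le 2(y-1)r^{2}/(r,\lambda)^{2}$. The twisted wreath case does fall to this bound, as in the paper. The first genuine gap is in the diagonal case: your elimination rests on the claim that the minimal conjugacy class size of a nonabelian simple group $T$ is $o(\sqrt{|T|})$. This is false for groups of Lie type of bounded rank. For $T=\mathrm{PSL}(2,q)$ the smallest nontrivial class (unipotent elements) has size $(q^{2}-1)/2\sim|T|^{2/3}$, which exceeds $\sqrt{(|T|-1)/10}$ for every $q$; the same happens for $\mathrm{PSL}(3,q)$, $\mathrm{Sz}(q)$, etc. So in the simple diagonal case with $m=2$ (where $v=|T|$ and the subdegrees are unions of classes) your bound yields no contradiction for infinitely many $T$, and the ``finitely many remaining $(T,\ell,m)$'' that you propose to finish by divisibility is in fact an infinite family: the decisive step is missing, not merely deferred. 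What actually closes this case in the paper is a different divisibility chain: $\frac{r}{(r,\lambda)}\mid v-1=|T|^{m-1}-1$ forces $\bigl(\frac{r}{(r,\lambda)},|T|\bigr)=1$, and since $r$ divides $|G_{\alpha}|$, which divides $|T|\,|\mathrm{Out}(T)|\,m!$, one gets $\frac{r}{(r,\lambda)}\mid|\mathrm{Out}(T)|\,m!$; because $|\mathrm{Out}(T)|$ is tiny this really does leave a finite list ($T=\mathrm{PSL}(2,q)$ with $q\le 9$, $m\le 4$), which is then killed by parameter search.

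The second gap is in product action with two components, which you correctly identify as the hard case but do not actually close. Divisibility plus the quasi-symmetric integrality relations do reduce it to a short list, but that list is \emph{not} eliminated by arithmetic ``by hand'': for instance $y=2$, $\omega=10$ gives $(v,b,r,k,\lambda)=(100,375,45,12,5)$, which satisfies every numerical condition available to you, and four further parameter sets survive (the paper's Table 1, with $v=21^{2},77^{2},92^{2},111^{2}$). Eliminating these requires identifying the possible socles of the component group $H$ (e.g.\ $\mathrm{A}_{5},\mathrm{A}_{6},\mathrm{A}_{10}$ on $10$ points; $\mathrm{A}_{7},\mathrm{PSL}(3,4)$ on $21$ points) and then genuinely group-theoretic and computational work: the divisibility $r\mid 2|T|^{2}|\mathrm{Out}(T)|^{2}/\omega^{2}$, a block-intersection argument forcing $\omega-2\le a_{1}<\lambda$ when the socle is alternating in its natural action, and explicit Magma verification that, e.g., $\mathrm{A}_{7}\wr\mathrm{S}_{2}$ on $441$ points has no block orbit forming a $2$-design. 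Your plan stops at the arithmetic, so the PA case remains open; a minor further imprecision is that in the holomorph simple type the socle is $T\times T$ (not $T$), though the degree $v=|T|$ you use is right.
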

	
	\begin{thm}\label{second}
		Let $\mathcal{D}$ be a non-trivial quasi-symmetric $2$-design with block intersection numbers $x=0$ and $2\leq y\leq 10$. If $G\leq\mathrm{Aut}(\mathcal{D})$ is flag-transitive and point-primitive, then the socle of $G$ cannot be an alternating group.
	\end{thm}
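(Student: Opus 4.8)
The plan is to suppose, for contradiction, that $\mathrm{soc}(G)=\A_n$ with $n\geq 5$. By Theorem~\ref{main} the group $G$ is of affine or almost simple type, and since an affine socle is elementary abelian while $\A_n$ is nonabelian simple, we are forced into the almost simple case, so $\A_n\trianglelefteq G\leq\S_n$. I would first record the numerical constraints imposed by $x=0$. Fixing a flag $(p,B)$, every block through $p$ other than $B$ meets $B$ in the point $p$ and hence, since the only intersection numbers are $0$ and $y$, in exactly $y$ points; double counting incidences between these $r-1$ blocks and the $k-1$ points of $B\setminus\{p\}$ yields
\begin{equation}\label{starrel}
(r-1)(y-1)=(k-1)(\lambda-1).
\end{equation}
Combined with $r(k-1)=\lambda(v-1)$ and Fisher's inequality $r\geq k$, relation~\eqref{starrel} gives $\lambda\geq y\geq 2$. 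Moreover the $r$ blocks through $p$ form a family of $k$-sets with constant pairwise intersection $y\geq 2$, so by Deza's theorem they are either a sunflower or satisfy $r\leq k^2-k+1$; a sunflower would place a second common point in all $r$ blocks through $p$, forcing $\lambda=r$ and hence $k=v$, which is excluded, so in fact $r\leq k^2-k+1$.

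Next I would set up the flag-transitivity machinery. Since $G_p$ is transitive on the $r$ blocks through $p$, we have $r\mid|G_p|=|G|/v$ and $r\mid\lambda d$ for every nontrivial subdegree $d$ of $(G,\mathcal{P})$; and since $\lambda<r$ for a nontrivial design, $\lambda v=\lambda(v-1)+\lambda\leq r(r-1)+\lambda<r^2$. The argument then divides according to the type of $G_p$ in the classification of maximal subgroups of $\S_n$ and $\A_n$. If $G_p$ is primitive as a permutation group on $\{1,\dots,n\}$ (so the action is neither on subsets nor on a partition), then $|G_p|<4^{n}$, and from $r\leq|G_p|$ together with $r^2>\lambda v\geq 2|G|/|G_p|$ we obtain $|G_p|^{3}>2|G|\geq n!$; this fails for all large $n$, leaving only finitely many $n$ to be cleared by inspection. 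The natural action ($v=n$, the subset case with $m=1$) is eliminated on its own: the blocks form a single orbit of $k$-subsets, but $\A_n$ is $k$-homogeneous, so every $k$-subset is a block and the pairwise intersections of distinct blocks realise more than two values once $y\geq 2$.

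The substantive work is the two families of large maximal subgroups. For the subset action $G_p=(\S_m\times\S_{n-m})\cap G$ we have $v=\binom{n}{m}$ and subdegrees $d_i=\binom{m}{i}\binom{n-m}{i}$ for $1\leq i\leq\min(m,n-m)$; here I would feed the divisibilities $r\mid\lambda d_i$ (in particular $r\mid\lambda m(n-m)$ and $r\mid\lambda\binom{n-m}{m}$) into $r^{2}>\lambda\binom{n}{m}$, and play them against \eqref{starrel}, the bound $r\leq k^{2}-k+1$ (which forces $\lambda\leq k(y-1)+1$), and the integrality of the number $k(r-1)/y$ of blocks meeting a fixed block, in order to confine $(n,m)$ to a finite list that is then checked directly. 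The imprimitive action $G_p=(\S_a\wr\S_b)\cap G$, with $n=ab$ and $v=n!/((a!)^{b}b!)$, is treated in the same spirit via its smallest subdegree.

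The main obstacle is exactly this last pair of cases. When $v$ is only polynomially (the subset action with small fixed $m$) or sub-exponentially related to $|G|$, the crude estimate $r\leq|G_p|$ carries no force, and a contradiction has to be extracted from the simultaneous interaction of $r\mid\lambda\gcd_i d_i$, the inequality $r^2>\lambda v$, the integrality of $k(r-1)/y$, and \eqref{starrel} under the restriction $2\leq y\leq 10$. I expect the genuinely delicate subcases to be the small values $m\in\{2,3\}$ together with small $n$, where the asymptotic estimates are too weak and the divisibility and integrality conditions must be resolved essentially by hand or by a short computation.
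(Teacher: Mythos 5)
Your overall architecture is the same as the paper's: reduce to the almost simple case via Theorem~\ref{main}, then split according to whether $G_\alpha$ acts primitively, transitively-imprimitively, or intransitively on $\{1,\dots,n\}$ (Liebeck--Praeger--Saxl), with the subset and wreath actions as the hard cases. However, there are two genuine gaps. First, your quantitative engine is too weak to deliver the finiteness you claim. From $r\mid\lambda d$ and $\lambda v<r^2$ one gets only $v<\lambda d^2$, and $\lambda$ is not bounded by any constant, so this does not confine $(n,m)$ to a finite list in the subset or imprimitive cases. What is actually needed is the inequality $v-1<2(y-1)\cdot r^2/(r,\lambda)^2$: combining your relation $(r-1)(y-1)=(k-1)(\lambda-1)$ with $r(k-1)=\lambda(v-1)$ gives $\lambda(\lambda-1)(v-1)=r(r-1)(y-1)$, hence $v-1<2(y-1)(r/\lambda)^2$, and since $r\mid\lambda d$ implies $\frac{r}{(r,\lambda)}\mid d$, one concludes $v\leq 2(y-1)d^2\leq 18d^2$ for every nontrivial subdegree $d$. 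You possess all the ingredients but never assemble them; as written, the promised ``finite list'' step fails.

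Second, and fatally for the plan as stated, there is a case that no amount of divisibility, integrality, or counting can close. Take $n=8$, $G_\alpha=(\mathrm{S}_3\times\mathrm{S}_5)\cap G$, so $v=\binom{8}{3}=56$, and the parameter set $(v,b,r,k,\lambda)=(56,210,45,12,9)$ with $y=3$. One checks $r(k-1)=\lambda(v-1)=495$, $(y-1)(r-1)=(k-1)(\lambda-1)=88$, $y\mid k$, $y\mid r-\lambda$, $b=vr/k$ integral, $k(r-1)/y=176$ integral, $r\leq k^2-k+1$, $r\mid|G_\alpha|$, and $\frac{r}{(r,\lambda)}=5$ divides every subdegree $10,30,15$; even the sharp inequality $v\leq 2(y-1)(r/(r,\lambda))^2=100$ holds. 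The paper eliminates this case only by invoking the theorem of Munemasa and Tonchev that no quasi-symmetric $2$-$(56,12,9)$ design with intersection numbers $0$ and $3$ exists (equivalently, the classification of Tang--Chen--Zhang for $v<100$). Your proposal contains no mechanism of this kind, so it would terminate with an unresolved candidate rather than a contradiction. A smaller oversight: for $n=6$ the containment $G\leq\mathrm{S}_n$ is false in general, since $\mathrm{Aut}(\mathrm{A}_6)\neq\mathrm{S}_6$; the groups $\mathrm{M}_{10}$, $\mathrm{PGL}(2,9)$ and $\mathrm{P\Gamma L}(2,9)$ must be treated separately, as the paper does.
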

	
	In \cite{Delandtsheer2001}, Delandtsheer classified the  linear spaces admitting  flag-transitive almost simple groups with alternating socle.   Combining Theorem \ref{second} with \cite{Delandtsheer2001}, one obtains the following Corollary. 
	
	\begin{corollary}\label{second1}
		Let $\mathcal{D}$ be a non-trivial quasi-symmetric $2$-design with block intersection numbers $x=0$ and $y\leq 10$. If $G\leq\mathrm{Aut}(\mathcal{D})$ is flag-transitive and point-primitive with alternating socle, then   $\mathcal{D}$ is the $2$-$(15,7,1)$ design $\mathrm{PG}(3,2)$ and $G=\mathrm{A}_7$ or $\mathrm{A}_8$.
	\end{corollary}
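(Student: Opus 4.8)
The plan is to obtain the corollary as a short consequence of Theorem \ref{main}, Theorem \ref{second}, and Delandtsheer's classification \cite{Delandtsheer2001}, so that the only genuine work lies in a brief case check at the end. First I would fix the O'Nan--Scott type of $G$. Since $\mathcal{D}$ is a non-trivial quasi-symmetric $2$-design with $x=0$ and $y\leq 10$, and $G$ is flag-transitive and point-primitive, Theorem \ref{main} applies and forces $G$ to be of affine or almost simple type. The socle of an affine-type group is elementary abelian, hence can never be a nonabelian simple (in particular alternating) group; so the hypothesis that the socle of $G$ is alternating immediately excludes the affine case and shows that $G$ is almost simple with alternating socle.

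Next I would pin down the second intersection number. By the standing convention $0=x\leq y$, and since a proper quasi-symmetric design satisfies $x<y$, we have $y\geq 1$. If $2\leq y\leq 10$ held, then Theorem \ref{second} would assert that the socle of $G$ cannot be alternating, contradicting our hypothesis; therefore $y=1$. With $x=0$ and $y=1$, any two distinct blocks meet in at most one point, so two distinct points lie in at most one common block. As $\mathcal{D}$ is a $2$-design this forces $\lambda=1$, and hence $\mathcal{D}$ is a $2$-$(v,k,1)$ design, that is, a linear space admitting the flag-transitive almost simple group $G$ with alternating socle.

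Finally I would invoke Delandtsheer's classification \cite{Delandtsheer2001} of flag-transitive linear spaces admitting an almost simple group with alternating socle, and run through the resulting list imposing our two extra constraints: non-triviality $2<k<v-1$, and the requirement that the intersection number $x=0$ genuinely occurs, i.e.\ that some two blocks are disjoint (equivalently $b>v$, so that $\mathcal{D}$ is not a projective plane, in which any two lines meet). I expect the main obstacle to be precisely this bookkeeping over Delandtsheer's list: verifying that every candidate other than $\mathrm{PG}(3,2)$ fails at least one of the two constraints, while confirming that $\mathrm{PG}(3,2)$ with $G=\mathrm{A}_7$ or $\mathrm{A}_8$ does yield a non-trivial proper quasi-symmetric design with $x=0$ and $y=1$. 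This identification completes the proof of the corollary.
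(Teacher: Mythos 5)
Your proposal is correct and follows essentially the same route as the paper, which obtains the corollary by combining Theorem \ref{second} (ruling out $2\leq y\leq 10$, hence forcing $y=1$ and $\lambda=1$, i.e.\ a linear space with $b>v$) with Delandtsheer's classification \cite{Delandtsheer2001}; your additional appeal to Theorem \ref{main} to exclude the affine case is harmless but redundant, since an alternating (nonabelian simple) socle already forces almost simple type. The final ``bookkeeping'' you anticipate is in fact vacuous: Delandtsheer's theorem concludes precisely that the only such flag-transitive linear space is $\mathrm{PG}(3,2)$ with $G=\mathrm{A}_7$ or $\mathrm{A}_8$, which is indeed a non-trivial $2$-$(15,7,1)$ design with intersection numbers $0$ and $1$.
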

	
	\begin{thm}\label{third}
		Let $\mathcal{D}$ be a non-trivial quasi-symmetric $2$-design with block intersection numbers $x=0$ and $y\leq 10$. If $G\leq\mathrm{Aut}(\mathcal{D})$ is flag-transitive and point-primitive with sporadic socle, then $\mathcal{D}$ and $G$ are one of the following:
		\begin{enumerate}
			\item[(1)] $\mathcal{D}$ is the unique $2$-$(12,6,5)$ design with block intersection numbers $0$ and $3$, $G=\mathrm{M}_{11}$;
			\item[(2)] $\mathcal{D}$ is the unique $2$-$(22,6,5)$ design with block intersection numbers $0$ and $2$, $G=\mathrm{M}_{22}$ or $\mathrm{M}_{22}:2$.
		\end{enumerate}
	\end{thm}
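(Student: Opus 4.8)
The plan is to parallel the strategy of \cite{Zhang2023}: reduce the problem to a finite arithmetic search governed by the known maximal subgroups of the sporadic groups. By Theorem \ref{main}, $G$ is of affine or almost simple type, and since $\mathrm{Soc}(G)$ is sporadic, $G$ is almost simple with $N\leq G\leq \mathrm{Aut}(N)$ for one of the $26$ sporadic simple groups $N$. Point-primitivity makes the point stabiliser $H=G_\alpha$ a maximal subgroup, so that $v=|G:H|$ is the index of a maximal subgroup and the subdegrees of $G$ on $\mathcal{P}$ are available from the \textsc{Atlas}. We may assume $y\geq 2$, the case $y=1$ being that of a proper linear space, for which the point-primitive flag-transitive examples are already known from the classification of flag-transitive linear spaces.

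First I would assemble the arithmetic. Alongside $\lambda(v-1)=r(k-1)$ and $bk=vr$, the hypothesis $x=0<y$ makes $\mathcal{D}$ proper, so Fisher's inequality gives $b>v$ and hence $r>k$. Counting over the blocks $B'\neq B$ both the incidences $\sum_{B'}|B\cap B'|=k(r-1)$ and the pairs $\sum_{B'}\binom{|B\cap B'|}{2}=\binom{k}{2}(\lambda-1)$, and using $x=0$, yields the relation $(r-1)(y-1)=(k-1)(\lambda-1)$, whence $v-1=\frac{r(r-1)(y-1)}{\lambda(\lambda-1)}$. Since $v>k$ forces $\lambda<r$, the same relation gives $\lambda>y\geq 2$, so $\lambda\geq 3$. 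Flag-transitivity then supplies the divisibilities $r\mid\lambda(v-1)$, $r\mid|H|$, and $r\mid\lambda d$ for every subdegree $d$ of $G$ on $\mathcal{P}$.

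The decisive reduction is a lower bound on $|H|$. From $r\geq k$ and $\lambda(v-1)=r(k-1)$ one gets $\lambda v<r^2$; combined with $r\leq|H|$ and $v=|G|/|H|$ this yields $|H|^3>\lambda|G|\geq 3|G|$. Only the largest maximal subgroups of each $N$ (and of the relevant almost simple overgroups) can therefore be a point stabiliser, which prunes the list drastically and controls even the Monster and the Baby Monster. For each surviving pair $(G,H)$ the degree $v$ is fixed; I would then loop over the divisors $r$ of $|H|$ compatible with the subdegree divisibilities and with $r>\sqrt{\lambda v}$, and over $2\leq y\leq 10$, solving $\lambda(\lambda-1)=\frac{r(r-1)(y-1)}{v-1}$ for $\lambda$ and setting $k=1+\lambda(v-1)/r$ and $b=vr/k$. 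Every tuple failing integrality, the range $2<k<v-1$, or the integrality and non-negativity of the eigenvalue multiplicities of the (strongly regular) block graph is discarded. This is a finite check for each $N$.

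The main obstacle is the case-by-case elimination itself. After the size bound several sporadic groups still retain a few large maximal subgroups, and for each one must certify that the resulting Diophantine system has no admissible solution; the delicate cases are borderline tuples passing all divisibility and integrality tests but corresponding to no actual design, which require either an ad hoc counting argument or a direct machine verification over the primitive action. For the finitely many surviving parameter sets one must finally settle genuine existence and uniqueness; I expect precisely the $2$-$(12,6,5)$ design under $G=\mathrm{M}_{11}$ (with $y=3$, $r=11$, $b=22$, from the $3$-transitive action of degree $12$) and the $2$-$(22,6,5)$ design under $G=\mathrm{M}_{22}$ or $\mathrm{M}_{22}{:}2$ (with $y=2$, $r=21$, $b=77$, from the $3$-transitive action of degree $22$) to remain, completing the classification.
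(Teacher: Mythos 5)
Your proposal is correct and follows essentially the same route as the paper: reduce to $G=T$ or $T{:}2$ with $T$ sporadic and point stabiliser $H$ maximal, prune the pairs $(G,H)$ via the cube bound $|H|^{3}>\lambda|G|$ (which is also exactly how the paper disposes of the possibly unlisted maximal subgroups of the Monster), then run a finite arithmetic search driven by $\lambda(v-1)=r(k-1)$, $(y-1)(r-1)=(k-1)(\lambda-1)$, $r\mid |H|$ and the subdegree/divisibility conditions. The only substantive difference is the endgame: the paper settles its four surviving parameter tuples by quoting \cite{Zhan2016} for the three with $(r,\lambda)=1$ (which eliminates the $(\mathrm{M}_{12},\mathrm{M}_{11})$ and $(\mathrm{M}_{24},\mathrm{M}_{23})$ candidates and yields existence and uniqueness of the two listed designs) and \cite{Zhang2023} for the $y=2$, $\lambda\nmid r$ tuple, whereas your proposed direct verification would work equally well, since for instance $\mathrm{M}_{12}$ and $\mathrm{M}_{24}$ have no subgroups of index $22$, $46$ or $184$, so block-transitivity already fails in the spurious cases.
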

	
	This paper is organized as follows. In Section \ref{s2}, we present some preliminary results on flag-transitive $2$-designs and the maximal subgroups of almost simple groups with alternating socle.  The proof of Theorem \ref{main} is presented in Section \ref{s3},  and Theorem \ref{second} is detailed in Section \ref{s4}. Finally, the proof of Theorem \ref{third} is given in Section \ref{s5}.
	
	\section{Preliminaries}\label{s2}
	
	\subsection{Flag-transitive $2$-designs}
	We first present some preliminary results about flag-transitive $2$-designs which are used throughout the paper.
	
	\begin{lemma}\label{s21}
		\cite[1.2 and 1.9]{Colburn} Let $\mathcal{D}$ be a $2$-design with the parameters $(v,b,r,k,\lambda)$. Then the following hold:
		\begin{enumerate}
			\item[(i)] $r(k-1)=\lambda(v-1)$, $r>\lambda$;
			\item[(ii)] $vr=bk$;
			\item[(iii)] $b\geq v$, $k\leq r$. If $\mathcal{D}$ is non-symmetric, then $b>v$ and $k<r$.
		\end{enumerate}
	\end{lemma}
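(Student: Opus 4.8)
The plan is to prove (i) and (ii) by elementary double counting and to obtain (iii) from Fisher's inequality via a rank argument on the incidence matrix. For part (i), I would fix a point $p\in\mathcal{P}$ and count in two ways the pairs $(q,B)$ where $q$ is a point distinct from $p$ and $B$ is a block containing both $p$ and $q$. Organizing the count by blocks, there are $r$ blocks through $p$, each contributing $k-1$ further points, for a total of $r(k-1)$; organizing it by points, each of the $v-1$ points $q\neq p$ lies together with $p$ in exactly $\lambda$ blocks, for a total of $\lambda(v-1)$. This yields $r(k-1)=\lambda(v-1)$. Since $\mathcal{D}$ is non-trivial we have $k<v$, so $(v-1)/(k-1)>1$ and hence $r>\lambda$. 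For part (ii), I would count the flags (incident point-block pairs): summing over the $v$ points gives $vr$ and summing over the $b$ blocks gives $bk$, whence $vr=bk$.

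For part (iii), the key object is the incidence matrix $N$, the $v\times b$ matrix with $0/1$ entries recording whether a point lies in a block. The $2$-design axioms translate into the Gram identity $NN^{T}=(r-\lambda)I_v+\lambda J_v$, where $J_v$ is the all-ones matrix. I would then compute $\det(NN^{T})=(r-\lambda)^{v-1}\bigl(r+\lambda(v-1)\bigr)$; invoking (i) to rewrite $r+\lambda(v-1)=r+r(k-1)=rk$ and using $r>\lambda$ from (i), this determinant is strictly positive, hence nonzero. Therefore $NN^{T}$ has rank $v$, which forces $\rank(N)=v$ and consequently $b\geq v$. Combining this with $vr=bk$ from (ii) gives $vr=bk\geq vk$, so $r\geq k$. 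Finally, if $\mathcal{D}$ is non-symmetric then $b\neq v$ by definition, and since $b\geq v$ always holds this means $b>v$; substituting into $vr=bk$ yields $vr=bk>vk$, so $r>k$.

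The only genuinely non-routine step is Fisher's inequality in (iii): unlike (i) and (ii), it cannot be reached by pure counting and rests on the linear-algebraic fact that the Gram matrix $NN^{T}$ is invertible. The hard part will be justifying the determinant computation, which relies on the spectral description of $J_v$ (eigenvalue $v$ with multiplicity one, eigenvalue $0$ with multiplicity $v-1$); this gives the eigenvalues of $(r-\lambda)I_v+\lambda J_v$ as $r+\lambda(v-1)$ once and $r-\lambda$ with multiplicity $v-1$, and it is this step one must verify with care before the nonvanishing of the determinant — and hence Fisher's inequality — follows.
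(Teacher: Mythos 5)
Your proposal is correct. Note that the paper does not prove this lemma at all --- it is quoted directly from the cited reference \cite[1.2 and 1.9]{Colburn} --- so there is no internal argument to compare against; your proof (double counting for (i) and (ii), and Bose's rank argument $NN^{T}=(r-\lambda)I_v+\lambda J_v$ for Fisher's inequality in (iii)) is exactly the standard textbook derivation that the cited handbook relies on. Your use of non-triviality ($k<v$) to get $r>\lambda$, which in turn makes the Gram determinant $(r-\lambda)^{v-1}rk$ nonzero, is legitimate here since the paper declares all its $2$-designs non-trivial.
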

	
	\begin{lemma}\label{s23}
		\cite[Lemma 2.4]{Zhang2023}	Let $\mathcal{D}$ be a $2$-design with the parameters $(v,b,r,k,\lambda)$, $\alpha$ be a point of $\mathcal{D}$ and $G$ be a flag-transitive automorphism group of $\mathcal{D}$. Then the following hold:
		\begin{enumerate}
			\item[(i)] %$r^{2}\geq rk>\lambda v$ and
			$|G_{\alpha}|^{3}>\lambda|G|$;
			\item[(ii)] $r\mid\lambda(v-1,|G_{\alpha}|)$, where $G_{\alpha}$ is the stabilizer of $\alpha$ in $G$;
			\item[(iii)] If $d$ is any non-trivial subdegree of $G$, then $r\mid\lambda d$. Moreover, $\frac{r}{(r,\lambda)}\mid d$.
		\end{enumerate}
	\end{lemma}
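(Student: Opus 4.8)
The plan is to exploit the elementary consequences of flag-transitivity together with the counting identities of Lemma \ref{s21}. Since $G$ is flag-transitive it is in particular transitive on both points and blocks: given points $\alpha,\alpha'$, choosing blocks $B\ni\alpha$ and $B'\ni\alpha'$ and mapping the flag $(\alpha,B)$ to $(\alpha',B')$ produces an element sending $\alpha$ to $\alpha'$. Thus the orbit--stabilizer theorem gives $|G|=v\,|G_\alpha|$. Moreover, writing a flag through $\alpha$ as a block $B$ incident with $\alpha$, flag-transitivity forces $G_\alpha$ to act transitively on the set of $r$ blocks through $\alpha$; hence $r=|G_\alpha:G_{\alpha,B}|$ divides $|G_\alpha|$, and in particular $r\le|G_\alpha|$. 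A further observation, reserved for part (iii), is that if $\Delta$ is any $G_\alpha$-orbit on $\mathcal{P}\setminus\{\alpha\}$, then $|B\cap\Delta|$ is independent of the block $B$ through $\alpha$: any $g\in G_\alpha$ carrying $B$ to a block $B'$ through $\alpha$ maps $B\cap\Delta$ bijectively onto $B'\cap\Delta$.

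For part (i) I would first reduce the claim to a purely combinatorial inequality. Using $|G|=v\,|G_\alpha|$, the assertion $|G_\alpha|^3>\lambda|G|$ is equivalent to $|G_\alpha|^2>\lambda v$, and since $r\le|G_\alpha|$ it suffices to prove $r^2>\lambda v$. This I would extract from Lemma \ref{s21}: the relation $r(k-1)=\lambda(v-1)$ rearranges to $rk=\lambda(v-1)+r$, whence
\[
r^2-\lambda v = r(r-k)+(r-\lambda).
\]
Both summands are nonnegative --- $r\ge k$ by Lemma \ref{s21}(iii) and $r>\lambda$ by Lemma \ref{s21}(i) --- and the second is strictly positive, giving $r^2>\lambda v$ and hence (i).

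For part (ii), the equation $r(k-1)=\lambda(v-1)$ immediately yields $r\mid\lambda(v-1)$, while $r\mid|G_\alpha|$ (established above) gives $r\mid\lambda|G_\alpha|$. Since $r$ divides both $\lambda(v-1)$ and $\lambda|G_\alpha|$, it divides their greatest common divisor $\lambda\,(v-1,|G_\alpha|)$, which is the claim. Part (iii) is the combinatorial heart of the lemma. Let $\Delta$ be a $G_\alpha$-orbit of length $d$ on $\mathcal{P}\setminus\{\alpha\}$, and count pairs $(\beta,B)$ with $\beta\in\Delta$ and $\alpha,\beta\in B$. Summing over $\beta$ first uses that each of the $d$ points of $\Delta$ lies with $\alpha$ in exactly $\lambda$ common blocks, giving $\lambda d$ pairs; summing over the $r$ blocks through $\alpha$ first uses the constancy of $n:=|B\cap\Delta|$ noted above, giving $rn$ pairs. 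Thus $rn=\lambda d$, so $r\mid\lambda d$; dividing by $(r,\lambda)$ turns this into $\tfrac{r}{(r,\lambda)}\,n=\tfrac{\lambda}{(r,\lambda)}\,d$ with coprime coefficients, and coprimality forces $\tfrac{r}{(r,\lambda)}\mid d$.

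The only genuinely delicate point is the reduction in part (iii): one must be sure that $n=|B\cap\Delta|$ really is the same for every block through $\alpha$, which is precisely where flag-transitivity, as opposed to mere point-transitivity, is indispensable. Everything else is bookkeeping with the relations of Lemma \ref{s21} and elementary divisibility.
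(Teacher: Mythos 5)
Your proposal is correct, and there is nothing to compare it against inside the paper itself: the paper states this lemma as a citation to \cite[Lemma 2.4]{Zhang2023} and gives no proof. Your argument --- orbit--stabilizer plus $r\mid|G_\alpha|$ (from transitivity of $G_\alpha$ on the blocks through $\alpha$) for parts (i) and (ii), the identity $r^2-\lambda v=r(r-k)+(r-\lambda)>0$ for the reduction in (i), and the double count $rn=\lambda d$ using the constancy of $n=|B\cap\Delta|$ for (iii) --- is precisely the standard proof found in the cited reference and the flag-transitive design literature, with the key point (that flag-transitivity, not mere point-transitivity, forces $|B\cap\Delta|$ to be constant) correctly identified and justified.
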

	
	It is well known that $\mathcal{D}$ is a quasi-symmetric design with intersection numbers $0$ and $1$ if  and only if $\mathcal{D}$ is a finite linear space with  $b>v$ (see \cite[Chapter 3]{Shrikhande}). For the linear space  $\mathcal{D}$ with a flag-transitive automorphism group $G$, we know that $G$ is point-primitive. Moreover, $G$ is either almost simple or of affine type. The cases where the socle of $G$ is an
	alternating group or a sporadic group  were handled  in \cite{Delandtsheer2001,Buekenhoutunpublished,Davies1987phd}. In particular, there are no non-trivial examples if the socle of $G$ is a sporadic group. Thus we suppose that $\mathcal{D}$ is a quasi-symmetric $2$-design with block intersection numbers $x=0$ and $y\geq2$. Then we have the following Lemma.

	\begin{lemma}\label{s22}
		Let $\mathcal{D}$ be a non-trivial quasi-symmetric $2$-design with block intersection numbers $x=0$ and $y\geq2$. Then the following relations hold:
		\begin{enumerate}
			\item[(i)] $(y-1)(r-1)=(k-1)(\lambda-1)$;
			\item[(ii)] $b\leq v(v-1)/k$, $y<\lambda\leq k-1$;
			\item[(iii)] $y\mid k$, $y\mid(r-\lambda)$;
			\item[(iv)] $v<\frac{k^{2}-k}{y-1}$ and $v\leq2(y-1)\cdot\frac{r^{2}}{(r,\lambda)^{2}}$. 
		\end{enumerate}
	\end{lemma}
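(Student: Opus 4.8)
The design is \emph{proper} since $x=0<y$, so by Lemma~\ref{s21} it is non-symmetric: $b>v$ and $k<r$; moreover $y<k$, because two distinct blocks cannot share all $k$ points. Throughout I will use the two consequences of $x=0$: any block other than a fixed block $B$ meets $B$ in $0$ or $y$ points, and any two blocks through a common point meet in exactly $y$ points. For (i) the plan is a double count. Fixing $B$ and counting incident (point, block) pairs with the point in $B$ and the block $\neq B$ gives $k(r-1)=y\,a$, where $a$ is the number of blocks meeting $B$; counting the pairs of points of $B$ that lie in a further common block gives $\binom{k}{2}(\lambda-1)=\binom{y}{2}a$. Eliminating $a$ yields $(y-1)(r-1)=(k-1)(\lambda-1)$.

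For (ii), the relation in (i) gives $\lambda-1=(y-1)\frac{r-1}{k-1}>y-1$ since $r>k$, hence $y<\lambda$. To obtain $\lambda\le k-1$ I pass to the point-residue at a fixed point $p$: the $r$ blocks through $p$, with $p$ deleted, are distinct $(k-1)$-subsets of a $(v-1)$-set with constant pairwise intersection $y-1$, so their $(v-1)\times r$ incidence matrix $M$ satisfies $M^{T}M=(k-y)I+(y-1)J$. As $y<k$ this Gram matrix is positive definite, forcing $r=\rank M\le v-1$, whence $\lambda=\frac{r(k-1)}{v-1}\le k-1$; then $b=\frac{\lambda v(v-1)}{k(k-1)}\le\frac{v(v-1)}{k}$.

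The heart of the lemma is the divisibility in (iii), which I would derive from the spectrum of the block graph. With $N$ the full $v\times b$ incidence matrix, every diagonal entry of $N^{T}N$ equals $k$ and every off-diagonal entry is $0$ or $y$, so $A:=\tfrac1y(N^{T}N-kI)$ is literally the $0/1$ adjacency matrix of the graph on blocks with adjacency ``meeting in $y$ points''. Reading off the eigenvalues of $N^{T}N$ from $NN^{T}=(r-\lambda)I+\lambda J$, they are $rk$, then $r-\lambda$ with multiplicity $v-1$, and $0$ with multiplicity $b-v\ge1$; hence $A$ has eigenvalues $\tfrac{k(r-1)}{y}$, $\tfrac{r-\lambda-k}{y}$ and $-\tfrac{k}{y}$, each genuinely occurring. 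Each is a rational number and, as an eigenvalue of the integer matrix $A$, an algebraic integer, hence an integer. Thus $-\tfrac{k}{y}\in\mathbb{Z}$ gives $y\mid k$, and $\tfrac{r-\lambda-k}{y}\in\mathbb{Z}$ together with $y\mid k$ gives $y\mid(r-\lambda)$.

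For (iv), combining (i) with $r(k-1)=\lambda(v-1)$ yields $v-1=(y-1)\frac{r(r-1)}{\lambda(\lambda-1)}$. Clearing denominators, the bound $v<\frac{k^{2}-k}{y-1}$ reduces to the already-established inequality $y<k$. For the last bound, $\lambda\ge2$ gives $\frac{r(r-1)}{\lambda(\lambda-1)}<\frac{2r^{2}}{\lambda^{2}}\le 2\frac{r^{2}}{(r,\lambda)^{2}}$, so $v-1<2(y-1)\frac{r^{2}}{(r,\lambda)^{2}}$, and since both sides are integers this upgrades to $v\le2(y-1)\frac{r^{2}}{(r,\lambda)^{2}}$. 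The only genuinely non-routine step is (iii): the algebraic-integrality argument for the block graph is what forces both divisibilities, whereas (i), (ii) and (iv) are double counting and elementary estimation built on the relation established in (i).
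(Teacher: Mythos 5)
Your proof is correct, but it takes a genuinely more self-contained route than the paper, which disposes of most of the lemma by citation: part (i), the bound $b\leq v(v-1)/k$, the inequality $y<\lambda$, and all of part (iii) are taken directly from \cite[Proposition 3.17, Theorem 3.8, Corollary 3.9]{Shrikhande}, and the paper only proves two things itself, namely $\lambda\leq k-1$ and part (iv). You instead prove everything from scratch: the double count for (i) and the block-graph eigenvalue/algebraic-integrality argument for (iii) are in substance the textbook proofs behind the citations, and your part (iv) estimate ($\tfrac{r}{\lambda}<\tfrac{r-1}{\lambda-1}=\tfrac{k-1}{y-1}$, then $\tfrac{r-1}{\lambda-1}<\tfrac{2r}{\lambda}$ from $\lambda\geq2$, with the final integrality upgrade) coincides with the paper's. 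The one place where you argue genuinely differently is $\lambda\leq k-1$: the paper deduces it from the \emph{cited} inequality $b\leq v(v-1)/k$ together with $b=vr/k$ (giving $r\leq v-1$, hence $\lambda=r(k-1)/(v-1)\leq k-1$), whereas you obtain $r\leq v-1$ directly by a Fisher-type rank argument (positive definiteness of $M^{T}M=(k-y)I+(y-1)J$ for the derived structure at a point) and then derive $b\leq v(v-1)/k$ as a consequence—the logical order of the two claims in (ii) is reversed. Both directions are sound; yours buys independence from the reference, at the cost of length. One small point to tighten: your justification of $y<k$ (``two distinct blocks cannot share all $k$ points'') tacitly assumes blocks are distinct as point sets; if repeated blocks were allowed, one should note that $y=k$ would force all blocks through a given pair of points to coincide, which contradicts non-triviality.
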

	
	\begin{proof}
		For part (i) and (ii), it suffices to  prove that $\lambda\leq k-1$ since other  properties are taken from \cite[Proposition 3.17 (i)(ii)]{Shrikhande}. Note that $\lambda=r(k-1)/(v-1)$  and $b=vr/k$ by Lemma \ref{s21}(i)(ii). Thus we have $r\leq v-1$ and so $k-1\geq\lambda$ by (ii).
		Part (iii) is taken from \cite[Corollary 3.9]{Shrikhande}. For more details, see \cite[Theorem 3.8]{Shrikhande}. By combining with (i) and   Lemma \ref{s21}(i),  we have $\frac{v-1}{k-1}=\frac{r}{\lambda}<\frac{r-1}{\lambda-1}=\frac{k-1}{y-1}$. Then $v< \frac{(k-1)^2}{y-1}+1=\frac{k^{2}-2k+y}{y-1}< \frac{k^{2}-k}{y-1}$ since $y<k$. On the other hand, we have $\frac{k-1}{y-1}=\frac{r-1}{\lambda-1}<\frac{2r}{\lambda}$ since $\lambda \geq2$. Hence $v-1=\frac{r(k-1)}{\lambda}<2(y-1)\cdot\frac{r^{2}}{\lambda^{2}}\leq2(y-1)\cdot\frac{r^{2}}{(r,\lambda)^{2}}$ and so $v\leq2(y-1)\cdot\frac{r^{2}}{(r,\lambda)^{2}}$, as stated in part (iv).
	\end{proof}

	\subsection{Maximal subgroups of the almost simple groups with alternating socle}
	We need the following results about maximal subgroups of the almost simple groups with alternating socle to prove Theorem \ref{second}.
	
	\begin{lemma}\label{max}
		\cite[p.366]{Liebeck1987} If $G$ is $\mathrm{A}_{n}$ or $\mathrm{S}_{n}$, acting on a set $\Omega$ of size $n$, and $H$ is any maximal subgroup of $G$ with $H\neq \mathrm{A}_{n}$, then $H$ satisfies one of the following:
		\begin{enumerate}
			\item[(i)] $H=(\mathrm{S}_{l}\times\mathrm{S}_{m})\cap G$, with $n=l+m$ and $l\neq m$ (intransitive case);
			\item[(ii)] $H=(\mathrm{S}_{l}\wr\mathrm{S}_{m})\cap G$, with $n=lm$, $l>1$, $m>1$ (imprimitive case);
			\item[(iii)] $H=\mathrm{AGL}(m,p)\cap G$, with $n=p^{m}$ and $p$ prime (affine case);
			\item[(iv)] $H=(T^{m}.(\mathrm{Out}(T)\times\mathrm{S}_{m}))\cap G$, with $T$ a non-abelian simple group, $m\geq2$ and $n=|T|^{m-1}$ (diagonal case);
			\item[(v)] $H=(\mathrm{S}_{l}\wr\mathrm{S}_{m})\cap G$, with $n=l^{m}$, $l\geq5$ and $m>1$ (wreath case);
			\item[(vi)] $T\trianglelefteq H\leq\mathrm{Aut}(T)$, with $T$ a non-abelian simple group, $T\neq \mathrm{A}_{n}$ and $H$ acting primitively on $\Omega$ (almost simple case).
		\end{enumerate}
	\end{lemma}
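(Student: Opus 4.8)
The plan is to follow the O'Nan--Scott strategy: reduce to the primitive case using the action of $H$ on $\Omega$, and then invoke the structure theory of the socle of a primitive group. Write $G \in \{\mathrm{A}_{n}, \mathrm{S}_{n}\}$, let $H \leq G$ be maximal with $H \neq \mathrm{A}_{n}$, and split according to the transitivity of $H$ on $\Omega$. If $H$ is intransitive, I would choose an orbit $\Delta$ of size $l$ with $1 \leq l < n$; then $H$ stabilizes $\Delta$ setwise, so $H \leq (\mathrm{S}_{l} \times \mathrm{S}_{m}) \cap G$ with $m = n - l$, and maximality forces equality, which is case (i). The condition $l \neq m$ records the obstruction to maximality here: when $l = m$ the setwise stabilizer of $\Delta$ lies properly inside the imprimitive group $(\mathrm{S}_{l} \wr \mathrm{S}_{2}) \cap G$ that also interchanges the two halves, so it cannot be maximal.

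If instead $H$ is transitive but imprimitive, it preserves a nontrivial system of imprimitivity, say with $m$ blocks each of size $l$, where $l, m > 1$ and $n = lm$. The full setwise stabilizer of such a block system in $G$ is $(\mathrm{S}_{l} \wr \mathrm{S}_{m}) \cap G$, and maximality of $H$ gives equality; this is case (ii).

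The essential case is when $H$ is primitive, and here I would apply the O'Nan--Scott theorem directly to $H$. Its socle $N = \mathrm{soc}(H)$ is a direct product $T^{k}$ of copies of a simple group $T$, and one distinguishes the possibilities by the type of $T$ together with the way $N$ acts on $\Omega$. If $T$ is abelian then $T \cong \mathbb{Z}_{p}$ and $N$ is a regular elementary abelian group of order $p^{m}$, so $H$ embeds in the affine group, $H \leq \mathrm{AGL}(m,p) \cap G$ with $n = p^{m}$, which is case (iii). If $T$ is nonabelian, the structure of the point stabilizer $N_{\alpha}$ determines which configuration occurs: a diagonal embedding gives the diagonal case (iv) with $H \leq (T^{m}.(\mathrm{Out}(T) \times \mathrm{S}_{m})) \cap G$ and $n = |T|^{m-1}$; a product (blow-up) action of a smaller primitive group gives the wreath case (v) with $n = l^{m}$ and $l \geq 5$; and a simple socle acting primitively on its own gives the almost simple case (vi) with $T \trianglelefteq H \leq \mathrm{Aut}(T)$.

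The main obstacle is not this rough trichotomy but the verification that the listed subgroups are genuinely maximal, that is, that there are no unexpected containments among the six types, for instance an almost simple primitive $H$ whose action happens to respect a product structure, or a diagonal-type group sitting inside a wreath-type group. Ruling out such coincidences, together with pinning down the exact arithmetic side-conditions (such as $l \geq 5$ in the wreath case and the primitivity requirement in (vi)), is the delicate part, and is precisely what is carried out in \cite{Liebeck1987}; I would take those maximality reductions as the technical core and otherwise organise the argument around the O'Nan--Scott dichotomy on the socle.
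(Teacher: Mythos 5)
The paper offers no proof of this statement at all: it is quoted verbatim from Liebeck--Praeger--Saxl \cite[p.366]{Liebeck1987}, so your sketch has to stand on its own merits rather than be compared to an in-paper argument. Your treatment of the intransitive and imprimitive cases is correct, including the explanation of the side condition $l\neq m$ in case (i). The genuine gap is in the primitive case. When you apply the O'Nan--Scott theorem to a primitive maximal subgroup $H$ with non-abelian socle $T^{k}$, the possible types are not only the three you list (simple diagonal, product action, almost simple): there is also the \emph{twisted wreath} type, in which $T^{k}$ acts regularly on $\Omega$, and the \emph{compound diagonal} type. Neither of these appears among the conclusions (iii)--(vi), so your case analysis as written simply loses them. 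The missing step is to show that every primitive group of twisted wreath or compound diagonal type is \emph{properly} contained in a product-action wreath product $(\mathrm{S}_{l}\wr\mathrm{S}_{m})\cap G$ of type (v) --- for the twisted wreath type via the embedding of the socle-regular group into $\mathrm{Hol}(T)\wr\mathrm{S}_{m}\leq \mathrm{S}_{|T|}\wr\mathrm{S}_{m}$ in product action on $|T|^{m}$ points --- whence such an $H$ can never be maximal. Without this containment argument, the trichotomy ``diagonal / product / almost simple'' for non-abelian socle is unjustified.

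A second, lesser issue is that you misidentify the technical core. The lemma is one-directional: \emph{if} $H$ is maximal, \emph{then} it has one of the six shapes. For that direction, no verification that the listed subgroups are genuinely maximal is required; what is required is precisely the containment statements above together with the O'Nan--Scott theorem (whose twisted-wreath analysis itself rests on the Schreier conjecture, hence on the classification of finite simple groups). The delicate question of exactly when groups of types (i)--(v) \emph{fail} to be maximal --- the unexpected containments you mention --- is the hard content of \cite{Liebeck1987}, but it is the converse direction and is not claimed by this lemma, so deferring it changes nothing about what your proof still owes.
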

	
	\begin{lemma}\label{odd}
		\cite[Theorem (b)(I)]{Liebeck} Let $G$ be a primitive permutation group of odd degree $n$ on a set $\Omega$ and let $H=G_\alpha$, where $\alpha$ in $\Omega$.  If $\mathrm{Soc}(G)$ is $\mathrm{A}_c$, an alternating group, then one of the following holds:
		\begin{enumerate}
			\item[(i)]  $H$ is intransitive, and $H=\left(\mathrm{S}_a\times \mathrm{S}_{c-a}\right)\cap G$ where $1\leq a<\frac{1}{2} c$;
			\item[(ii)]  $H$ is transitive and imprimitive, and $H=\left(\mathrm{S}_a\wr \mathrm{S}_{c/a}\right)\cap G$ where $a>1$ and $a \mid c$;
			\item[(iii)]  $H$ is primitive, $n=15$ and $G$ is $\mathrm{A}_7$ or  $\mathrm{A}_8$.
		\end{enumerate}
	\end{lemma}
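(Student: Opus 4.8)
The plan is to convert the hypothesis ``$n$ is odd'' into a statement about $2$-parts of group orders and then run through the classification of maximal subgroups supplied by Lemma \ref{max}. Since $G$ is primitive, the stabilizer $H=G_\alpha$ is maximal in $G$, and because $\mathrm{Soc}(G)=\mathrm{A}_c$ the group $G$ is almost simple with $\mathrm{A}_c\trianglelefteq G\leq\mathrm{S}_c$. I would first record the elementary but decisive observation that $n=[G:H]$ is odd if and only if $H$ contains a Sylow $2$-subgroup of $G$, equivalently $|H|_2=|G|_2$, where $|X|_2$ denotes the largest power of $2$ dividing $|X|$. Writing $v_2$ for the exponent of $2$ in an integer and $s_2(c)$ for the number of $1$'s in the binary expansion of $c$, Legendre's formula gives $v_2(c!)=c-s_2(c)$, so $|\mathrm{S}_c|_2=2^{c-s_2(c)}$ and $|\mathrm{A}_c|_2=2^{c-s_2(c)-1}$; in either case $|G|_2\geq 2^{c-s_2(c)-1}$. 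This reduces every case to a comparison of $2$-adic valuations.

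Next I would apply Lemma \ref{max} to split $H$ into its six structural types on the natural $c$-point set. The intransitive type (i) gives $H=(\mathrm{S}_a\times\mathrm{S}_{c-a})\cap G$ with $n=\binom{c}{a}$; by Kummer's theorem $v_2\binom{c}{a}=s_2(a)+s_2(c-a)-s_2(c)$, so the index is odd exactly when $s_2(a)+s_2(c-a)=s_2(c)$, and since $v_2\binom{2m}{m}=s_2(m)\geq1$ the value $a=c/2$ is always excluded; together with the maximality convention this forces $1\leq a<c/2$, which is conclusion (i). The imprimitive type (ii) gives $H=(\mathrm{S}_a\wr\mathrm{S}_{c/a})\cap G$ with $a\mid c$ and $1<a<c$, which is conclusion (ii). Thus the intransitive and transitive-imprimitive stabilizers are accounted for, and the remaining types (iii)--(vi) are precisely the cases in which $H$ acts \emph{primitively} on $\{1,\dots,c\}$.

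The substance of the theorem is to eliminate these primitive types except for one small configuration. Here the requirement $|H|_2=|G|_2\geq 2^{c-s_2(c)-1}$ is extremely restrictive, because a primitive subgroup of $\mathrm{S}_c$ other than $\mathrm{A}_c$ or $\mathrm{S}_c$ has order far below $c!$. For each of the affine (iii), diagonal (iv), product-action (v) and almost-simple (vi) types I would derive an explicit upper bound on $v_2(|H|)$ from the order formulas of the groups involved, invoking the classification of finite simple groups through the simple group $T$ occurring in types (iv) and (vi). Since $s_2(c)\leq\log_2 c+1$, the lower bound $v_2(|G|)\geq c-s_2(c)-1$ grows essentially linearly in $c$, whereas these bounds on $v_2(|H|)$ grow much more slowly, so the equality $|H|_2=|G|_2$ can persist only for finitely many small $c$. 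The residual small degrees are then settled by direct inspection, and the only survivors are $G=\mathrm{A}_8$ with $H=\mathrm{AGL}(3,2)$ (the affine type, $c=8$) and $G=\mathrm{A}_7$ with $H\cong\mathrm{PSL}(3,2)$ (the almost-simple type, $c=7$); in both cases $[G:H]=15$, using $\mathrm{A}_8\cong\mathrm{PSL}(4,2)$ acting on the $15$ points of $\mathrm{PG}(3,2)$, which is conclusion (iii).

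I expect the main obstacle to be exactly this last step: producing $2$-part bounds sharp enough to terminate the analysis at small $c$, and then correctly enumerating the finitely many remaining small-degree primitive groups. Types (iv) and (vi) are the delicate ones, since they depend on order estimates for arbitrary non-abelian simple groups and hence on the classification of finite simple groups; the affine and product-action types are comparatively mechanical once $v_2(|\mathrm{AGL}(m,p)|)$ and $v_2(|\mathrm{S}_l\wr\mathrm{S}_m|)$ are written out and compared with $c-s_2(c)-1$. The exceptional degree $c=6$, where $\mathrm{Aut}(\mathrm{A}_6)$ properly contains $\mathrm{S}_6$, should be handled separately by hand.
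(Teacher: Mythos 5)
The paper never proves this lemma: it is imported verbatim from Liebeck--Saxl \cite[Theorem (b)(I)]{Liebeck}, so your attempt can only be measured against that source. Your skeleton is the right one and matches the standard first moves: $n=[G:H]$ is odd iff $H$ contains a Sylow $2$-subgroup of $G$; Legendre/Kummer ($v_2(c!)=c-s_2(c)$, $v_2\binom{c}{a}=s_2(a)+s_2(c-a)-s_2(c)$) handles the subset and partition stabilizers (indeed proving more than the lemma asserts, since cases (i) and (ii) carry no parity condition); the remaining work is exactly the maximal subgroups that are primitive on $\{1,\dots,c\}$, i.e.\ types (iii)--(vi) of Lemma \ref{max}; and your list of survivors, $(c,H)=(7,\mathrm{PSL}(3,2))$ and $(8,\mathrm{AGL}(3,2))$, both of index $15$ and lying in $\mathrm{A}_c$, is correct, as is the separate treatment of $c=6$.

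The genuine gap is in the step you yourself flag as the obstacle, and it is not merely a matter of effort: your mechanism for killing the primitive types cannot work as stated. You argue that since a primitive $H\neq \mathrm{A}_c,\mathrm{S}_c$ has order ``far below $c!$'', the quantity $v_2(|H|)$ must fall below $c-s_2(c)-1$. But an order bound controls the $2$-part only via $v_2(|H|)\leq\log_2|H|$, so to get a contradiction you need the very strong inequality $|H|<2^{\,c-s_2(c)-1}$, and the classical ``far below $c!$'' bounds are nowhere near this: Bochert's bound $|H|\leq c!/\lfloor(c+1)/2\rfloor!$ and the Praeger--Saxl bound $|H|<4^{c}$ both allow $|H|$ to exceed $2^{c}$, hence never contradict $v_2(|H|)\geq c-s_2(c)-1$. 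The affine, diagonal and wreath types (iii)--(v) can indeed be finished mechanically from explicit order formulas, as you suggest (e.g.\ $v_2(|\mathrm{AGL}(m,2)|)=m+\binom{m}{2}$ versus $2^{m}-2$, which singles out $m=3$). But for the almost simple type (vi) there is no order formula: bounding $v_2(|H|)$ in terms of $c$ requires relating $|\mathrm{Aut}(T)|$ to the minimal primitive degree of $T$ across all finite simple groups, i.e.\ a CFSG-based bound of Cameron/Mar\'oti type $|H|<c^{\,1+\log_2 c}$ (with its own exceptional families), after which one must still inspect all primitive groups of degree below the crossover (roughly $c\leq 50$). That analysis is precisely the content of the cited theorem, so the proposal defers, rather than supplies, the proof of its essential case (iii).
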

	
	\begin{lemma}\label{large}
		
		\cite[Lemma 2.6]{Zhu2016} Suppose that $G=\mathrm{A}_{n} $ or $\mathrm{S}_{n}$, where $n\geq 5$ and $H$ is  a maximal primitive subgroup of $G$ such that $|H|^{3}>|G|$.
		
		\begin{enumerate}
			\item[(i)] \cite[Theorem 2]{Alavi} If $G=\mathrm{A}_{n}$, then $(n, H)$ is one of the following:
			\begin{equation*}
				\begin{aligned} 
					&(5,\mathrm{D}_{10}), &&(6,\mathrm{PSL}(2,5)), &&(7,\mathrm{PSL}(2,7)), &&(8,\mathrm{AGL}(3,2)),\\
					&(9,3^{2}.\mathrm{SL}(2,3)), &&(9,\mathrm{P}\Gamma\mathrm{L}(2,8)), &&(10,\mathrm{M}_{10}),&&(11,\mathrm{M}_{11}),\\
					&(12,\mathrm{M}_{12}), &&(13,\mathrm{PSL}(3,3)), &&(15,\mathrm{A}_{8}), &&(16,\mathrm{AGL}(4,2)),\\
					&(24,\mathrm{M}_{24}).& && &&& \\
				\end{aligned}\\[6pt]
				\nonumber
			\end{equation*}		
			\item[(i)] If $G=\mathrm{S}_{n}$, then $H=\mathrm{A}_{n},\mathrm{S}_{n-1}$ or $(n,H)$ is one of the following:
			\begin{equation*}
				\begin{aligned} 
					&(5,\mathrm{AGL}(1,5)), &&(6,\mathrm{PGL}(2,5)), &&(7,\mathrm{AGL}(1,7)), &&(8,\mathrm{PGL}(2,7)),\\
					&(9,\mathrm{AGL}(2,3)), &&(10,\mathrm{P}\Gamma\mathrm{L}(2,9)), &&(12,\mathrm{PGL}(2,11)).&&\\
				\end{aligned}\\[6pt]
				\nonumber
			\end{equation*}
		\end{enumerate}
	\end{lemma}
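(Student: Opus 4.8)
The plan is to combine the O'Nan--Scott description of the primitive maximal subgroups of $\mathrm{A}_n$ and $\mathrm{S}_n$ with a uniform upper bound for the order of a primitive permutation group, and then to play that bound off against the lower bound $|H|>|G|^{1/3}$ forced by the hypothesis. First I would record that, since $H$ is maximal and primitive with $H\neq \mathrm{A}_n$, Lemma \ref{max} leaves only the affine, diagonal, wreath (product action) and almost simple types; in each of these $H$ acts primitively of degree $n$, so any general estimate for primitive groups applies to $|H|$.

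Next I would invoke Mar\'oti's theorem on the orders of primitive groups: if $H$ is primitive of degree $n$ and does not contain $\mathrm{A}_n$, then either $(n,H)$ lies in a short explicit list of exceptions (the Mathieu groups $\mathrm{M}_{11},\mathrm{M}_{12},\mathrm{M}_{23},\mathrm{M}_{24}$ in their natural degrees, together with groups of product-action type $H\leq \mathrm{S}_m\wr \mathrm{S}_r$ of degree $n=m^r$), or else $|H|<n^{1+\lfloor\log_2 n\rfloor}$. The hypothesis $|H|^3>|G|$ gives $|H|>(n!/2)^{1/3}$. Using Stirling's estimate $\log(n!)\sim n\log n$, the right-hand side grows like $n^{n/3}$, whereas the Mar\'oti bound grows only like $n^{\log_2 n}$; hence $n^{1+\lfloor\log_2 n\rfloor}>(n!/2)^{1/3}$ can hold only for $n$ below an explicit small threshold. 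This reduces the generic case to finitely many degrees.

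For those finitely many degrees I would enumerate the primitive groups of each degree (finitely many, available from the standard primitive-group library) and simply test the inequality $|H|^3>|G|$, reading off the surviving pairs. This yields list (i) when $G=\mathrm{A}_n$; for $G=\mathrm{S}_n$ one must additionally observe that $H=\mathrm{A}_n$ (index $2$) and $H=\mathrm{S}_{n-1}$ (index $n$) are automatically large and primitive, which is why they head list (ii), while the remaining entries come from the same small-degree search. The exceptional families from Mar\'oti are treated by hand: the Mathieu groups are checked directly against the relevant factorials, and for the product-action subgroups one compares $\big((l!)^m\,m!\big)^3$ with $(l^m)!$ and verifies that the inequality fails whenever $l\geq 5$ and $m\geq 2$, so no new pairs arise there.

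The main obstacle is precisely this last step: making the generic Mar\'oti estimate sharp enough that the finite search range is both small and provably complete, and in particular disposing of the wreath/product-action family uniformly, since there the polynomial-type order bound is too weak and a separate factorial estimate is required. Once the range of $n$ is pinned down, the finite verification is routine bookkeeping, and the two lists in (i) and (ii) emerge directly.
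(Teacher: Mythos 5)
The first thing to note is that the paper does not prove this lemma at all: it is imported verbatim from \cite[Lemma 2.6]{Zhu2016}, whose part (i) is in turn \cite[Theorem 2]{Alavi} restricted to primitive maximal subgroups. So your proposal is not an alternative to an argument in the paper but a reconstruction of the cited literature; and the strategy you describe --- a uniform order bound for primitive groups played against $|H|>(n!/2)^{1/3}$, followed by a finite search --- is essentially how those sources proceed. However, as written, your key reduction step rests on a misquotation of Mar\'oti's theorem, and the error occurs exactly at the dangerous cases.

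Mar\'oti's exceptional class is not ``product-action groups of degree $m^r$''. It consists of the groups $(\mathrm{A}_m)^r\trianglelefteq H\leq \mathrm{S}_m\wr\mathrm{S}_r$ in product action where the factor $\mathrm{S}_m$ acts on $k$-element subsets, so the degree is $n=\binom{m}{k}^r$; crucially this includes $r=1$, $k\geq 2$, i.e.\ the groups $\mathrm{A}_m\trianglelefteq H\leq\mathrm{S}_m$ acting on $k$-subsets. These subset actions have order super-polynomial in the degree and genuinely violate the bound you rely on: for $k=2$ one has $|H|\approx m!$ with $n=\binom{m}{2}$, and already for $m=20$ one gets $20!>190^{8}=n^{1+\lfloor\log_2 n\rfloor}$. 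Under your dichotomy these groups fall into neither branch --- they are absent from your exception list and exempt from the polynomial bound --- so your conclusion that ``the generic case reduces to finitely many degrees'' is unproven for an infinite family of degrees $\binom{m}{k}$. The repair is routine but necessary: eliminate them by the direct comparison $(m!)^3<\binom{m}{k}!/2$ for all $m\geq 5$ and $2\leq k\leq\lfloor m/2\rfloor$ (true, but tight enough to require actual checking: at $(m,k)=(5,2)$ it reads $1728000<1814400$), and likewise dispose of product actions with $k\geq 2$; only then is the remaining finite search (roughly $n\leq 21$, with degrees $23$ and $24$ entering only through the Mathieu exceptions, of which $\mathrm{M}_{23}$ fails the inequality and $\mathrm{M}_{24}$ survives) legitimate and provably complete. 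Two smaller points: $\mathrm{S}_{n-1}$ is intransitive on $\Omega_n$, hence not ``large and primitive'' as you assert --- its appearance in list (ii) is an artefact of how \cite{Zhu2016} states the result for arbitrary maximal subgroups, and it is exactly how the present paper uses the lemma, dispatching $\mathrm{A}_n$ and $\mathrm{S}_{n-1}$ separately before invoking it; and the ``explicit small threshold'' for the generic case should actually be computed, since the whole argument hinges on it.
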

	
	\section{Proof of Theorem \ref{main}}\label{s3}

	Firstly, we recall the O'Nan Scott Theorem \cite{Liebeck1988}. Let $M=\mathrm{Soc}(G)$,  where $\mathrm{Soc}(G)$ is the product of all minimal normal subgroups of $G$.
	If $G$ is a finite primitive permutation group on $\mathcal{P}$, then $M$ is a direct product of some isomorphic simple groups. Let $T$ denote a non-abelian simple subgroup of $M$. The O'Nan Scott Theorem divides primitive group $G$ into five types:
	\begin{enumerate}
		\item[(i)] Affine type, $M=Z^{m}_{p}\leq G\leq\mathrm{AGL}(m,p)$;% and $Z^{m}_{p}$ acts regularly on $\mathcal{P}$ ;
		\item[(ii)] Almost simple type, $M=T\leq G\leq\mathrm{Aut}(T)$; %and $T$ is the unique minimal normal subgroup of $G$;
		\item[(iii)] Twisted wreath type, $M=T^{m}\leq G\leq T^{m}:\mathrm{S}_{m}$, where   $T^{m}$ acts regularly on $\mathcal{P}$ and $m\geq6$; 
		\item[(iv)] Simple diagonal type, $M=T^{m}\leq G\leq T^{m}:(\mathrm{Out}(T)\times \mathrm{S}_{m})$, where $m\geq2$ and $|\mathcal{P}|=|T|^{m-1}$;
		\item[(v)] Product action type, $M=T^{m}\leq G\leq H\wr S_{m}$ , where $H$ has a primitive action (of almost simple or simple diagonal type) on a set $\Gamma$ of size $\omega\geq5$ and $m\geq2$.
	\end{enumerate}
	
	In order to prove  Theorem \ref{main}, it suffices to show that  (iii)-(v) cannot occur. We assume the following hypothesis.
	
 \begin{hypothesis}\label{hy1}
			Let $\mathcal{D}$ be a non-trivial quasi-symmetric $2$-design with block intersection numbers $x=0$ and $2\leq y\leq 10$ admitting a flag-transitive point-primitive group $G$ of automorphisms. 
		\end{hypothesis}
 
	Before proving the main theorems, we  introduce the following Computational method.
	
	\subsection{Computational method}\label{Comp}
	
	Given the value of $v$, we will use Magma \cite{Bosma} to search for positive integers $(v,b,r,k,\lambda)$ that satisfy the following conditions  as potential parameters for the design $\mathcal{D}$.
	\begin{equation}\label{y}
		2\leq y\leq10,
	\end{equation}
	\begin{equation}\label{v}
		k<v<\frac{k^{2}-k}{y-1},
	\end{equation}
	\begin{equation}\label{r1}
		r(k-1)=\lambda(v-1),
	\end{equation}
	\begin{equation}\label{r2}
		(y-1)(r-1)=(k-1)(\lambda-1),
	\end{equation}
	\begin{equation}\label{divisible}
		y\mid k,~y\mid (r-\lambda),
	\end{equation}
	\begin{equation}\label{b}
		b=\frac{vr}{k}.\\
	\end{equation}
	We provide specific calculation steps:
	\begin{enumerate}
		\item [(1)] For a given $v$ and $y$ obtained from \eqref{y}, substitute $v$ and  $y$ into \eqref{v} to  obtain the range of positive integer $k$;
		\item [(2)] Substitute the above possible values of $k$ and $v$ into \eqref{r1} and \eqref{r2} to obtain all possible positive integer solutions for $(r,\lambda)$, where $r>k>\lambda>1$;
		\item [(3)] Check whether $(y,k,r,\lambda)$ satisfies the conditions in \eqref{divisible};
		\item [(4)] Substitute the positive integer $r$ and corresponding $v,k $ into \eqref{b} to check whether $b$ is a positive integer.
	\end{enumerate}
	Finally, we identify the parameters $(v,b,r,k,\lambda) $  that satisfy all above conditions  as potential  parameters for the design $\mathcal{D}$.
	
	\subsection{Twisted wreath type}
	
	\begin{prop}\label{tw}
		Assume  Hypothesis \ref{hy1}.  Then $G$ is not of twisted wreath type. 
	\end{prop}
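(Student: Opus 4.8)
The plan is to rule out the twisted wreath type by pitting the flag-transitivity order bound of Lemma \ref{s23}(i) against the structural description of such groups. In the twisted wreath case the socle is $M=\mathrm{Soc}(G)=T^{m}$ with $T$ a non-abelian simple group and $m\geq 6$, and $M$ acts regularly on $\mathcal{P}$; hence $v=|\mathcal{P}|=|T|^{m}\geq 60^{m}$, using $|T|\geq|\mathrm{A}_{5}|=60$. Since $M$ is regular, the point stabilizer $G_{\alpha}$ is a complement to $M$ in $G$, so $G_{\alpha}\cong G/M$ embeds in $\mathrm{S}_{m}$ (the group permuting the $m$ simple direct factors of $M$), giving the crude bound $|G_{\alpha}|\leq m!$.

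First I would feed this into Lemma \ref{s23}(i). From $|G_{\alpha}|^{3}>\lambda|G|=\lambda v\,|G_{\alpha}|$ we get $|G_{\alpha}|^{2}>\lambda v$, and since $\lambda>y\geq 2$ forces $\lambda\geq 3$ by Lemma \ref{s22}(ii), together with $|G_{\alpha}|\leq m!$ and $v\geq 60^{m}$ this yields $(m!)^{2}>3\cdot 60^{m}$. A direct numerical comparison shows $(m!)^{2}<3\cdot 60^{m}$ for every $m$ with $6\leq m\leq 19$, which is the desired contradiction in that range; moreover, replacing $|T|=60$ by any larger value only strengthens the estimate, so the same comparison disposes of an even wider range of $m$ whenever $T\neq\mathrm{A}_{5}$.

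The remaining difficulty, and the step I expect to be the real obstacle, is that for large $m$ the estimate $|G_{\alpha}|\leq m!$ is far too generous, so the crude comparison no longer closes. To treat $m\geq 20$ I would exploit the defining feature of the construction: the stabilizer $P_{1}$ of a coordinate in the natural degree-$m$ action of $P=G/M\leq\mathrm{S}_{m}$ carries a twisting homomorphism $\phi\colon P_{1}\to\mathrm{Aut}(T)$ whose image contains $\mathrm{Inn}(T)\cong T$, so that $T$ occurs as a composition factor of $P_{1}\leq\mathrm{S}_{m-1}$. This forces $P_{1}$ to be small: for instance when $T=\mathrm{A}_{5}$ and $m\geq 7$ the subgroup $P_{1}$ cannot contain $\mathrm{A}_{m-1}$ (whose only non-abelian composition factor is $\mathrm{A}_{m-1}\neq\mathrm{A}_{5}$), whence $[\mathrm{S}_{m-1}:P_{1}]\geq m-1$ and $|G_{\alpha}|=m\,|P_{1}|$ is pushed well below $m!$. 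Controlling in this way the subgroups of $\mathrm{S}_{m-1}$ all of whose composition factors are abelian or isomorphic to the fixed $T$ should bound $|G_{\alpha}|$ by a quantity growing too slowly to satisfy $|G_{\alpha}|^{2}>\lambda v=\lambda|T|^{m}$; since a larger $|T|$ inflates the right-hand side faster than it can enlarge $|G_{\alpha}|$, the inequality must fail for all $m\geq 6$. This contradiction shows that $G$ is not of twisted wreath type.
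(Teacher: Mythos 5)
Your proposal has two genuine gaps, one in each half. First, the bound $|G_\alpha|\le m!$ is not justified. Since $C_G(M)=1$, the stabilizer $G_\alpha\cong G/M$ does act faithfully on $M$ by conjugation, hence embeds in $\mathrm{Aut}(T)\wr \mathrm{S}_m$; but its induced action on the \emph{set of $m$ simple factors} need not be faithful. The kernel of that action only embeds in $\mathrm{Out}(T)^m$ (an element of the kernel inducing an inner automorphism on every factor would centralize $M$ up to an element of $M$, hence be trivial), so the honest crude bound is $|G_\alpha|\le|\mathrm{Out}(T)|^m\,m!$. This is not pedantry: for $T=\mathrm{A}_5$ the corrected comparison, $(2^m\,m!)^2$ versus $3\cdot 60^m$, already fails at $m=11$, so even your ``easy'' range $6\le m\le 19$ is not closed by the crude estimate.

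Second, and more seriously, the part you yourself flag as the real obstacle ($m\ge 20$) is a plan rather than a proof, and its structural premise is false. You pass from ``$T$ is a composition factor of $P_1$'' (true, since $\phi(P_1)\supseteq\mathrm{Inn}(T)$) to controlling ``subgroups of $\mathrm{S}_{m-1}$ all of whose composition factors are abelian or isomorphic to $T$'' --- but nothing restricts $\mathrm{Ker}\,\phi$. For example $T=\mathrm{A}_5$, $P=\mathrm{A}_{12}$, $P_1=(\mathrm{S}_5\times\mathrm{S}_7)\cap\mathrm{A}_{12}$ with $\phi$ the projection onto $\mathrm{S}_5=\mathrm{Aut}(\mathrm{A}_5)$ meets every constraint you invoke (and $\phi$ does not extend to $\mathrm{A}_{12}$), yet $P_1$ has $\mathrm{A}_7$ as a composition factor; and $T=\mathrm{A}_{m-1}$, $P=\mathrm{S}_m$, $P_1=\mathrm{S}_{m-1}$, $\phi=\mathrm{id}$ shows that $|G_\alpha|=m!$ can genuinely occur, so the concluding assertion ``a larger $|T|$ inflates the right-hand side faster than it can enlarge $|G_\alpha|$'' is exactly what would have to be proved, not assumed. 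The paper's proof avoids all of this by never bounding $|G_\alpha|$ at all: because $M$ is regular, the sets $\Gamma_i=\alpha^{T_i}$ give a union of non-trivial $G_\alpha$-orbits of total size $m(|T|-1)$, so Lemma \ref{s23}(iii) yields $r\mid\lambda m(|T|-1)$, i.e.\ $\frac{r}{(r,\lambda)}\le m(|T|-1)$, and then Lemma \ref{s22}(iv) forces $|T|^m=v\le 2(y-1)m^2(|T|-1)^2\le 18\,m^2(|T|-1)^2$, which is false simultaneously for all $m\ge 6$ and $|T|\ge 60$. The lesson is that in the twisted wreath case the subdegree bound of Lemma \ref{s23}(iii) is exponentially stronger than any order bound obtained from Lemma \ref{s23}(i), precisely because $|G_\alpha|$ can be enormous while the suborbits arising from the regular socle are tiny; switching to that mechanism would both repair and drastically shorten your argument.
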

	
	\begin{proof}
		Suppose that $G$ is of twisted wreath type with $\mathrm{Soc}(G)=T^{m}$ for some non-abelian simple group $T$. Let $M=T_{1}\times T_{2}\times\cdot\cdot\cdot\times T_{m}$, where each $T_{i}$ $\left(i\in\{1,2,\dots,m\}\right)$ is a non-abelian simple group and $T_{i}\cong T$. Then $M\cong T^{m}$ is regular on the points of $\mathcal{D}$ and thus  $v=|T|^{m}$. Let $\a$ be a point of $\mathcal{D}$. Then, by the proof of  \cite[Proposition 3.2]{Zhang2023}, we know that   $\bigcup^{m}_{i=1}\Gg_{i}\setminus\{\a\}$  is the union of some non-trivial $G_{\a}$-orbits and  $|\bigcup^{m}_{i=1}\Gg_{i}\setminus\{\a\}| =m\left(|T|-1\right)$, where  $\Gg_{i}=\a^{T_{i}} $ for $i\in \{1,2,\dots,m\}$. By Lemma \ref{s23}(iii),  $r\mid\lambda m\left(|T|-1\right)$ and so $\frac{r}{(r,\lambda)}\leq m\left(|T|-1\right) $. By Lemma \ref{s22}(iv), we have 
		\begin{equation}
			|T|^{m}=v\leq2(y-1)\cdot\frac{r^{2}}{(r,\lambda)^{2}}\leq2(y-1)m^{2}\left(|T|-1\right)^{2},
			\nonumber
		\end{equation}
		i.e.,  $y\geq\frac{|T|^{m}}{2m^2\left(|T|-1\right)^{2}}+1$.
		Note that  $m\geq6$ and $|T|\geq60$  since $T$ is non-abelian simple. Then $\frac{|T|^{m}}{2m^2\left(|T|-1\right)^{2}}+1\geq\frac{60^{6}}{72\cdot59^{2}}+1>186154$, contradicting the condition  $y\leq10$. This completes the proof.
		%  i.e.,  $\frac{|T|^{m}}{2m^2(|T|-1)^{2}}\leq y-1$. 
	\end{proof}
	\subsection{Simple diagonal type}
	\begin{prop}\label{sd}
		Assume Hypothesis \ref{hy1}.  Then $G$ is not of simple diagonal type. 
	\end{prop}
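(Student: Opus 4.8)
The plan is to rule out simple diagonal type by the same two-pronged strategy used for twisted wreath type, sharpened by a divisibility argument for the small values of $m$. Write $M=\mathrm{Soc}(G)=T_1\times\cdots\times T_m$ with each $T_i\cong T$ a non-abelian simple group and $m\geq2$, so that $v=|T|^{m-1}$. Fixing a point $\alpha$, the stabiliser $M_\alpha$ is the diagonal subgroup $\cong T$, whence $|M_\alpha|=|T|$ and $|G_\alpha|=|T|\cdot e$, where $e=|G:M|$ divides $|\mathrm{Out}(T)|\cdot m!$.

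First I would bound $m$ from above exactly as in Proposition \ref{tw}. Put $\Gamma_i=\alpha^{T_i}$; since $T_i\cap M_\alpha=1$ each orbit has length $|T|$, and two such orbits meet only in $\alpha$, so $\bigcup_{i=1}^m\Gamma_i\setminus\{\alpha\}$ has size $m(|T|-1)$. As $G_\alpha$ permutes the components $T_i$ by conjugation and fixes $\alpha$, this set is $G_\alpha$-invariant, hence a union of non-trivial subdegrees. Lemma \ref{s23}(iii) then gives $r\mid\lambda m(|T|-1)$, so $\frac{r}{(r,\lambda)}\leq m(|T|-1)$, and Lemma \ref{s22}(iv) yields $|T|^{m-1}=v\leq2(y-1)m^2(|T|-1)^2$. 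Since $|T|^{m-1}>|T|^{m-3}(|T|-1)^2$, this forces $|T|^{m-3}<18m^2$; as $|T|\geq60$ and $y\leq10$, this fails for every $m\geq5$, leaving $m\in\{2,3,4\}$.

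To finish, I would exploit the coprimality $(v-1,|T|)=1$. Because $|G_\alpha|=|T|\cdot e$, it follows that $(v-1,|G_\alpha|)$ divides $e$, so by Lemma \ref{s23}(ii) $\frac{r}{(r,\lambda)}\mid(v-1,|G_\alpha|)$ divides $|\mathrm{Out}(T)|\,m!$; feeding this into Lemma \ref{s22}(iv) gives $|T|^{m-1}\leq2(y-1)(|\mathrm{Out}(T)|\,m!)^2$. Using the standard bound $|\mathrm{Out}(T)|\leq|T|/30$ (with equality only for $T=\mathrm{A}_5$), the cases $m=3$ and $m=4$ become immediately contradictory. For $m=2$ the same inequality forces $|T|\leq72\,|\mathrm{Out}(T)|^2$, leaving only $T\in\{\mathrm{A}_5,\mathrm{A}_6,\mathrm{PSL}(2,7),\mathrm{PSL}(2,8)\}$; for each of these $v-1=|T|-1\in\{59,359,167,503\}$ is a prime exceeding $e\mid2|\mathrm{Out}(T)|$, so $(v-1,|G_\alpha|)=1$ and hence $\frac{r}{(r,\lambda)}=1$, i.e. $r\mid\lambda$, contradicting $r>\lambda$ from Lemma \ref{s21}(i).

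The main obstacle is precisely the small cases $m\in\{2,3,4\}$, where the crude orbit bound of the second paragraph no longer forces a contradiction; here the sharper divisibility $r\mid\lambda(v-1,|G_\alpha|)$ together with the arithmetic of $|T|-1$ is doing the real work, and one must be careful that the bound on $|\mathrm{Out}(T)|$ is applied uniformly across all families of non-abelian simple groups. If for some surviving group $|T|-1$ were not coprime to $e$, I would instead fall back on the computational method of Section \ref{Comp} to eliminate the finitely many candidate parameter sets $(v,b,r,k,\lambda)$ for the fixed value $v=|T|^{m-1}$.
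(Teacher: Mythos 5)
Your proof is correct, and its skeleton matches the paper's: both derive the bound $|T|^{m-1}=v\leq 2(y-1)m^2(|T|-1)^2$ from the component orbits $\alpha^{T_i}$ together with Lemma \ref{s23}(iii) and Lemma \ref{s22}(iv) to force $m\leq 4$, and both then derive $\frac{r}{(r,\lambda)}\mid |\mathrm{Out}(T)|\,m!$ (the paper via $r\mid |G_\alpha|$ and $\frac{r}{(r,\lambda)}\mid v-1$, you via Lemma \ref{s23}(ii) and the coprimality $(v-1,|T|)=1$ --- equivalent routes) to get $|T|^{m-1}\leq 2(y-1)|\mathrm{Out}(T)|^2(m!)^2$. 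Where you genuinely diverge is the endgame: the paper reduces to finitely many values of $v$ (namely $|\mathrm{PSL}(2,q)|$ for $q\in\{5,7,8,9\}$ when $m=2$, and $60^2$, $60^3$ for $m=3,4$) and then kills them all by the Magma search of Section \ref{Comp}, whereas you eliminate $m=3,4$ outright by inserting the bound $|\mathrm{Out}(T)|\leq |T|/30$ into the second inequality, and for $m=2$ you observe that for each surviving group $v-1=|T|-1\in\{59,167,359,503\}$ is a prime dividing neither $|T|$ nor $2|\mathrm{Out}(T)|$, so $(v-1,|G_\alpha|)=1$, whence $r\mid\lambda$ by Lemma \ref{s23}(ii), contradicting $r>\lambda$. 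This buys a completely computer-free proof, which is arguably cleaner than the paper's (and incidentally avoids the paper's slightly too generous claim that $T=\mathrm{PSL}(2,5)$ survives its own inequality for $m=3,4$). Two small points to tighten: (a) your statement that the orbits $\alpha^{T_i}$ pairwise meet only in $\alpha$ fails for $m=2$, where $\alpha^{T_1}=\alpha^{T_2}=\mathcal{P}$; the inequality $\frac{r}{(r,\lambda)}\leq m(|T|-1)$ you draw from it is still valid (indeed stronger, since $\frac{r}{(r,\lambda)}\mid v-1$), so nothing breaks, but the claim should be restricted to $m\geq 3$; (b) the passage from $|T|\leq 72|\mathrm{Out}(T)|^2$ to the explicit list $\{\mathrm{A}_5,\mathrm{A}_6,\mathrm{PSL}(2,7),\mathrm{PSL}(2,8)\}$ is a classification statement that needs a citation (the paper uses \cite[Lemma 2.3]{Tian2013}); likewise the bound $|\mathrm{Out}(T)|\leq|T|/30$, while true with equality exactly for $\mathrm{A}_5$, should be referenced rather than asserted as standard.
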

	
	\begin{proof}
		Suppose that $G$ is  of simple diagonal type with $\mathrm{Soc}(G)=T^{m}$ for some non-abelian simple group $T$. Similar to the case of twisted wreath type (see the proof of \cite[Proposition 3.3]{Zhang2023}), we can deduce that 
		\begin{equation}\label{SD_1}
			|T|^{m-1}=v\leq2(y-1)\cdot\frac{r^{2}}{(r,\lambda)^{2}}\leq2(y-1)m^{2}\left(|T|-1\right)^{2}.
		\end{equation}
		Furthermore, since 
		$ G_{\a}\leq\mathrm{Aut}(T)\times \mathrm{S}_{m} $ and $r$ divides $|G_{\a}|$, we obtain $r$ divides $|T||\mathrm{Out}(T)|m!$.  By Lemma \ref{s21}(i), we have $ \frac{r}{(r,\lambda)}\cdot(k-1)=\frac{\lambda}{(r,\lambda)}\cdot\left(|T|^{m-1}-1\right)$. It follows that $\frac{r}{(r,\lambda)}$ divides $|T|^{m-1}-1$, which implies that $\left(\frac{r}{(r,\lambda)},|T|\right)=1$  and hence $\frac{r}{(r,\lambda)}$ divides $|\mathrm{Out}(T)|m!$. Then, by Lemma \ref{s22}(iv), we have 
		\begin{equation}\label{SD_2}
			|T|^{m-1}=v\leq2(y-1)\cdot\frac{r^{2}}{(r,\lambda)^{2}}\leq2(y-1)|\mathrm{Out}(T)|^{2}\left(m!\right)^{2}.
		\end{equation}
		Since $|T|\geq60$ and $y\leq10$, we can deduce that $m\leq4$ from \eqref{SD_1}. 	Furthermore, from \eqref{SD_2}, we obtain $|T|\leq72|\mathrm{Out}(T)|^{2}$ if $m=2$;  $|T|^2\leq648|\mathrm{Out}(T)|^{2}$ if $m=3$;  $|T|^3\leq10368|\mathrm{Out}(T)|^{2}$ if $m=4$. By \cite[Lemma 2.3]{Tian2013}, a non-abelian simple group $T$ satisfying the inequality $|T|\leq 400|\mathrm{Out}(T)|^{2}$ is isomorphic to one of the following groups:
		\[\mathrm{PSL}(2,q) \;\;\text{for}\;\; q=5,7,8,9,11,13,16,27, \; \text{or}\;\;\mathrm{PSL}(3,4). \]
		By calculating the orders of the above groups and the  corresponding outer automorphism groups, we can obtain that $T=\mathrm{PSL}(2,q)$ if  $m=2$, where $q\in\{5,7,8,9\}$, and $T=\mathrm{PSL}(2,5)$ if $m=3$ or $4$. Thus we deduce that $v=|\mathrm{PSL}(2,5)|^2$, $|\mathrm{PSL}(2,5)|^3$  or $\left|\mathrm{PSL}(2,q)\right|$, where $q\in\{5,7,8,9\}$.  However, for these values of $v$, computation  in Magma \cite{Bosma} shows that there are no feasible parameters for $\mathcal{D}$.
		This completes the proof.
	\end{proof}
	
	\subsection{Product action type}
	
	\begin{lemma}\label{PT}
		Assume Hypothesis \ref{hy1}. If $G$ is  of product action type such that $T^{m}\leq  G \leq H\wr\mathrm{S}_{m}$ for some non-abelian simple group $T$, where $H$ has a primitive action  on a set $\Gamma$ of size $\omega$, then $m=2$.
	\end{lemma}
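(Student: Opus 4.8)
The plan is to extract a small non-trivial subdegree of $G$ from the coordinate structure of the product action, and then to force $m$ to be small by comparing it with the inequalities of Lemma~\ref{s23}(iii) and Lemma~\ref{s22}(iv). Recall that in the product action $v=\omega^m$ with $\omega\geq 5$ and $m\geq 2$, and that for the diagonal point $\alpha=(\gamma,\ldots,\gamma)\in\Gamma^m$ one has $G_\alpha=G\cap(H_\gamma\wr\mathrm{S}_m)$. The goal is to show that $m\geq 3$ is impossible.

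First I would produce the subdegree. Writing a typical $g\in G\leq H\wr\mathrm{S}_m$ as $g=(h_1,\ldots,h_m)\sigma$ with $\sigma\in\mathrm{S}_m$, the condition $\alpha^g=\alpha$ forces $\gamma^{h_i}=\gamma$ for every $i$, i.e.\ each $h_i\in H_\gamma$; since $\sigma$ merely permutes coordinates, $g$ then preserves, for every point of $\Gamma^m$, the number of coordinates in which it differs from $\alpha$. Hence the set $\Sigma$ of points differing from $\alpha$ in exactly one coordinate is $G_\alpha$-invariant, and since $|\Sigma|=m(\omega-1)$ it is a union of non-trivial $G_\alpha$-orbits. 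Thus $G$ has a non-trivial subdegree $d$ with $d\leq m(\omega-1)$, and Lemma~\ref{s23}(iii) gives $\frac{r}{(r,\lambda)}\mid d$, so $\frac{r}{(r,\lambda)}\leq m(\omega-1)$.

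Feeding this into Lemma~\ref{s22}(iv) and using $y\leq 10$ yields $\omega^m=v\leq 2(y-1)\frac{r^2}{(r,\lambda)^2}\leq 18\,m^2(\omega-1)^2<18\,m^2\omega^2$, whence $\omega^{m-2}<18m^2$. Since $\omega\geq 5$, this already rules out $m\geq 7$ and confines the remaining cases to $\omega=5$ if $m=6$, $\omega\leq 7$ if $m=5$, $\omega\leq 16$ if $m=4$, and $\omega\leq 161$ if $m=3$. For each such pair $(m,\omega)$ the point number $v=\omega^m$ is an explicit integer, so I would run the computational method of Section~\ref{Comp} on each resulting value of $v$ and check that no admissible tuple $(v,b,r,k,\lambda)$ survives, thereby eliminating $m\in\{3,4,5,6\}$ and leaving $m=2$.

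The main obstacle is the finite case analysis for $m\geq 3$: the band $\omega\leq 161$ for $m=3$ produces many candidate values of $v$, so the elimination genuinely rests on the Magma parameter search, and for any value of $v$ where a tuple does survive one must further test $\frac{r}{(r,\lambda)}$ against the actual subdegrees of the specific primitive group of degree $\omega$ (whose socle is severely restricted for these small $\omega$). A secondary technical point, needed to make the subdegree bound rigorous, is the verification above that every element of $G_\alpha$ preserves the number of coordinates in which a point differs from $\alpha$; this is exactly where the choice of a diagonal base point $\alpha$, forcing each $h_i\in H_\gamma$, is used.
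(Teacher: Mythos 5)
Your reduction is structurally the same as the paper's (the coordinate-difference set you construct is exactly the content of the lemma of Liang and Zhou that the paper cites to obtain $r\mid\lambda m(\omega-1)$), but the step where you hand the problem to the computer fails as stated. You keep from the subdegree argument only the inequality $\frac{r}{(r,\lambda)}\le m(\omega-1)$, use it once to confine $(m,\omega)$, and then claim that the bare parameter search of Section~\ref{Comp} on each $v=\omega^m$ will return nothing. It will not: for $m=3$ and any $\omega=q$ with $5\le q\le 10$, the tuple
\begin{equation}
(y,v,b,r,k,\lambda)=\bigl(q,\ q^{3},\ q(q^{2}+q+1),\ q^{2}+q+1,\ q^{2},\ q+1\bigr)
\nonumber
\end{equation}
satisfies every one of \eqref{y}--\eqref{b}: indeed $r(k-1)=(q^{2}+q+1)(q^{2}-1)=(q+1)(q^{3}-1)=\lambda(v-1)$, $(y-1)(r-1)=(q-1)(q^{2}+q)=q(q^{2}-1)=(k-1)(\lambda-1)$, $y\mid k$, $y\mid r-\lambda=q^{2}$, $b=vr/k$ is an integer, and $k<v<(k^{2}-k)/(y-1)=q^{3}+q^{2}$. (For $q$ a prime power these are the parameters of the point--hyperplane design of $\mathrm{AG}(3,q)$, a genuine quasi-symmetric design with $x=0$, $y=q$, so no refinement of the arithmetic alone can remove them.) Hence your check ``no admissible tuple survives'' is false for $m=3$, and the proof does not close.

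The repair is already inside your own argument, and it is precisely what the paper does. Since $\frac{r}{(r,\lambda)}$ divides \emph{every} non-trivial subdegree and your set $\Sigma$ is a union of non-trivial $G_\alpha$-orbits, $\frac{r}{(r,\lambda)}$ divides the sum of their lengths, i.e. $\frac{r}{(r,\lambda)}\mid m(\omega-1)$, not merely $\frac{r}{(r,\lambda)}\le m(\omega-1)$. The paper writes this as $a\cdot\frac{r}{(r,\lambda)}=m(\omega-1)$ and carries that equation into the Magma search (over tuples $(m,a,\omega,y)$, which also sharpens the bound on $\omega$ for each $a$); this constraint eliminates the family above, since there $\frac{r}{(r,\lambda)}=q^{2}+q+1>3(q-1)$ because $(q^{2}+q+1,q+1)=1$. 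Your fallback clause---testing $\frac{r}{(r,\lambda)}$ against the subdegrees of a specific primitive group of degree $\omega$---points in the right direction but is needlessly heavy and group-dependent, whereas the divisibility $\frac{r}{(r,\lambda)}\mid m(\omega-1)$ is uniform in $H$; it must be built into the search from the start, not left as an optional afterthought. A minor further point: had you kept $(\omega-1)^{2}$ instead of relaxing it to $\omega^{2}$, the case $(m,\omega)=(6,5)$ would disappear at once, since $5^{6}=15625>18\cdot 36\cdot 16=10368$; this recovers the paper's bound $3\le m\le 5$.
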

	\begin{proof}
		 By  \cite[Lemma 3.1]{Liang2016},  we know that $r\mid\lambda{m}(\omega-1)$ and so  $\frac{r}{(r,\lambda)}\mid{m}(\omega-1)$. Combining this with Lemma \ref{s22}(iv), we can conclude that
		\begin{equation}
			\omega^{m}=v\leq2(y-1)\cdot\frac{r^{2}}{(r,\lambda)^{2}}\leq2(y-1)m^{2}(\omega-1)^{2}, \nonumber
		\end{equation}
		i.e., $y\geq\frac{\omega^{m}}{2m^2(\omega-1)^{2}}+1$.
		We  assume  that $m\geq3$. Note that $\omega\geq5$. Then we deduce that  $10\geq y\geq\frac{\omega^{m}}{2m^2(\omega-1)^{2}}+1\geq\frac{5^{m}}{32m^2}+1$ and so $3\leq m\leq5$. Recall that $\frac{r}{(r,\lambda)}\mid  {m}(\omega-1)$. So there exists a positive integer $a$ such that $a\cdot\frac{r}{(r,\lambda)}=m(\omega-1)$. Then we have  $\frac{a^2\omega^{m}}{2m^2(\omega-1)^{2}}\leq y-1$. For $3\leq m\leq5$, $\omega\geq5$ and $2\leq y\leq10$, we get a finite number of sets of $4$-tuples $(m,a,\omega,y)$ satisfying the above inequality. For example, if $m=3$, $a=1$ and $y=10$, then we have $\omega\leq159$. For all possible  $4$-tuples $(m,a,\omega,y)$,  combining the equation  $a\cdot\frac{r}{(r,\lambda)}=m(\omega-1)$ with the conditions in Section \ref{Comp}, computation in Magma \cite{Bosma} shows that there are no feasible parameters for $\mathcal{D}$.  This completes the proof.
	\end{proof}
	
	\begin{prop}\label{pt}
		Assume  Hypothesis \ref{hy1}. Then $G$ is not of product action type.
	\end{prop}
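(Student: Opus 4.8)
The plan is to show that the product action type is ruled out by combining Lemma \ref{PT} with a direct analysis of the case $m=2$. By Lemma \ref{PT}, if $G$ is of product action type with $T^m \leq G \leq H \wr \mathrm{S}_m$, then necessarily $m=2$, so it suffices to derive a contradiction under the assumption $m=2$. In this situation $\mathrm{Soc}(G) = T \times T$ acts on $\mathcal{P} = \Gamma \times \Gamma$, where $H$ acts primitively on $\Gamma$ with $|\Gamma| = \omega \geq 5$ and $T = \mathrm{Soc}(H)$. Thus $v = \omega^2$. The strategy is to exploit the product structure of the point set to obtain a strong divisibility and size constraint on the parameters, narrowing $\omega$ to a finite range that can be checked computationally via the method of Section \ref{Comp}.

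First I would recall, as in the proof of Lemma \ref{PT} (which cites \cite[Lemma 3.1]{Liang2016}), that for $m=2$ the quantity $r/(r,\lambda)$ divides $2(\omega-1)$, coming from the subdegree analysis of the product action. Together with Lemma \ref{s22}(iv) this yields $\omega^2 = v \leq 2(y-1) \cdot r^2/(r,\lambda)^2 \leq 2(y-1) \cdot 4(\omega-1)^2 = 8(y-1)(\omega-1)^2$. Since $y \leq 10$, this gives a bound of the form $\omega^2 \leq 72(\omega-1)^2$, which imposes no useful upper bound on $\omega$ by itself; the real leverage must come from the finer equation $a \cdot \frac{r}{(r,\lambda)} = 2(\omega-1)$ for some positive integer $a$, exactly paralleling the argument inside Lemma \ref{PT}. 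Writing $\frac{a^2 \omega^2}{8(\omega-1)^2} \leq y - 1 \leq 9$ still only bounds $a$ for fixed $\omega$, so the essential additional ingredient is that $H$, and hence $T$, is a primitive almost simple or simple diagonal group on $\Gamma$, which forces $\omega$ and $|H|$ to be related; in particular $r$ divides $|G_\alpha|$, and $G_\alpha \leq (H_\gamma \wr \mathrm{S}_2)$ where $\gamma \in \Gamma$, giving an explicit arithmetic handle on the admissible $\omega$.

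The key steps, in order, are: (1) invoke Lemma \ref{PT} to reduce to $m=2$; (2) establish $v = \omega^2$ and the divisibility $\frac{r}{(r,\lambda)} \mid 2(\omega-1)$; (3) combine these with Lemma \ref{s22}(iv) to obtain, for each choice of the cofactor $a$ with $a \cdot \frac{r}{(r,\lambda)} = 2(\omega-1)$ and each $y \in \{2,\dots,10\}$, a finite list of candidate values of $\omega$ satisfying $\frac{a^2 \omega^2}{8(\omega-1)^2} \leq y-1$; and (4) for each resulting admissible $\omega$, run the computational method of Section \ref{Comp} with $v = \omega^2$, imposing the extra constraint $a \cdot \frac{r}{(r,\lambda)} = 2(\omega-1)$, and verify in Magma \cite{Bosma} that no feasible parameter set $(v,b,r,k,\lambda)$ survives. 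This mirrors the final sentences of Lemma \ref{PT} and of Proposition \ref{sd} exactly.

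The main obstacle I anticipate is step (3): unlike the twisted wreath and simple diagonal cases, where the inequality alone forces $m$ and $|T|$ into a tiny range, here the bound $\omega^2 \leq 8(y-1)(\omega-1)^2$ does not directly bound $\omega$, so the contradiction cannot be purely numerical and must route through the finer equation relating $a$, $\omega$ and $y$. One must argue carefully that only finitely many $(a,\omega,y)$ satisfy $a^2 \omega^2 \leq 8(y-1)(\omega-1)^2$ with $\omega \geq 5$—indeed for $\omega \geq 5$ one has $(\omega-1)^2 < \omega^2$, so $a^2 < 8(y-1) \leq 72$, bounding $a \leq 8$, and then for each fixed $a$ the inequality $a^2 \omega^2 \leq 72(\omega-1)^2$ bounds $\omega$ from above—after which the enumeration is finite and the Magma verification closes the case. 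Care is needed to ensure the subdegree bound $\frac{r}{(r,\lambda)} \mid 2(\omega-1)$ is applied correctly and that the computational search genuinely exhausts all candidate $\omega$.
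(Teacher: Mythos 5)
Your reduction to $m=2$ and the divisibility $\frac{r}{(r,\lambda)}\mid 2(\omega-1)$ match the paper, but your step (3) contains an arithmetical error that breaks the whole plan. Writing $a\cdot\frac{r}{(r,\lambda)}=2(\omega-1)$, the inequality $a^2\omega^2\leq 8(y-1)(\omega-1)^2$ does bound $a$ (indeed $a^2<8(y-1)\leq 72$), but it does \emph{not} bound $\omega$ from above for fixed $a$: since $a\leq 8<\sqrt{72}$ and $\omega^2/(\omega-1)^2\to 1$, the inequality $a^2\omega^2\leq 72(\omega-1)^2$ is satisfied by \emph{all} sufficiently large $\omega$. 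So your candidate set of $\omega$ is infinite, and the ``finite enumeration plus Magma'' strategy collapses -- this is exactly the obstacle you flagged, and your proposed resolution of it is false. The paper escapes differently: it works with the cofactor $a_1$ defined by $a_1r=2\lambda(\omega-1)$ (relative to $r/\lambda$, not $r/(r,\lambda)$), combines this with $\lambda(\omega^2-1)=r(k-1)$ to get the exact relation $k-1=a_1\cdot\frac{\omega+1}{2}$, and then feeds this into the quasi-symmetric identity $(y-1)(r-1)=(k-1)(\lambda-1)$ of Lemma \ref{s22}(i) to obtain the two-sided bound $\frac{8}{3}(y-1)\leq a_1^2<8(y-1)$, which pins $a_1$ down to one or two values for each $y$. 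For each fixed pair $(y,a_1)$ the four equations are then solved \emph{exactly}, expressing $r,k,\lambda,b$ as rational functions of $\omega$, and it is the \emph{integrality} of these expressions -- e.g. $(\omega+2)\mid 12$ when $y=2$, $(\omega+17)\mid 156$ when $y=3$ -- not any inequality, that reduces $\omega$ to finitely many values.

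There is a second gap: even after a finite list is obtained, your step (4) asserts that the arithmetic search of Section \ref{Comp} comes up empty. It does not. The paper's Table \ref{soc} records five surviving parameter sets, e.g. $(v,b,r,k,\lambda)=(10^2,375,45,12,5)$ with $y=2$ and $(21^2,980,100,45,10)$ with $y=5$. Eliminating these requires group-theoretic and design-theoretic work that your proposal omits entirely: identifying the possible socles $\mathrm{Soc}(H)$ of primitive groups of degree $\omega$; the block-intersection argument via $a_1=|B\cap\Theta_1|$ and \cite[Lemma 3.8]{Zhang2024} to rule out the natural alternating actions; the divisibility $r\mid 2|T|^2|\mathrm{Out}(T)|^2/\omega^2$ from \cite[Lemma 3.10]{Tian2013} to rule out $\mathrm{A}_5$, $\mathrm{A}_6$, $\mathrm{PSL}(2,7)$ and $\mathrm{M}_{22}$; and explicit Magma computations with the subgroups and orbits of $\mathrm{A}_7\wr\mathrm{S}_2$ on $441$ points to show that the candidate $2$-$(441,45,10)$ design does not exist. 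Without both repairs -- the integrality-based finiteness argument and the case-by-case elimination of the surviving parameters -- your proof does not go through.
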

	\begin{proof}
		With the notation in Lemma \ref{PT}, we have $m=2$ by Lemma \ref{PT}. Since  $r\mid \lambda {m}(\omega-1)$,  there exists a positive integer $a_{1}$ such that $a_{1}r=2\lambda (\omega-1)$. Combining this  with  the equation $\lambda(\omega^{2}-1)=r(k-1)$, we obtain $k-1=a_{1}\cdot\frac{\omega+1}{2}$. On the other hand,  by Lemma \ref{s22}(i), we can deduce that
		\begin{equation}
			\frac{2(\omega-1)}{a_{1}}=\frac{r}{\lambda}< \frac{r-1}{\lambda-1}=\frac{k-1}{y-1}=\frac{a_{1}}{y-1}\cdot \frac{\omega+1}{2},
			\nonumber
		\end{equation}
		
		\begin{equation}
			\frac{2(\omega-1)}{a_{1}}=\frac{r}{\lambda}> \frac{1}{2}\cdot \frac{r-1}{\lambda-1}=\frac{1}{2}\cdot\frac{k-1}{y-1}=\frac{a_{1}}{y-1}\cdot \frac{\omega+1}{4}.
			\nonumber
		\end{equation}
		So we have 
		\begin{equation}\label{a1}
			\frac{8}{3}(y-1)\leq 4(y-1)\cdot\frac{\omega-1}{\omega+1}< a_{1}^{2}<8(y-1)\cdot\frac{\omega-1}{\omega+1}<8(y-1).
		\end{equation}
		Consider the following system of equations:
		\begin{equation}\label{v2}
			v=\omega^{2},
		\end{equation}
		\begin{equation}\label{kw1}
			k-1=a_{1}\cdot\frac{\omega+1}{2},
		\end{equation}
		\begin{equation}\label{kw2}
			a_{1}r=2\lambda(\omega-1),
		\end{equation}
		\begin{equation}\label{kw3}
			(y-1)(r-1)=(k-1)(\lambda-1).
		\end{equation}
		If $y=2$, then $a_{1}=2$ by \eqref{a1}. From \eqref{kw1}, we deduce that  $k=\omega+2$. By combining \eqref{kw2} with \eqref{kw3}, we can obtain that $r=\frac{\omega(\omega-1)}{2}$ and  $\lambda=\frac{\omega}{2}$, i.e., $(v,r,k,\lambda)=\left(\omega^{2}, \frac{\omega(\omega-1)}{2},\omega+2,\frac{\omega}{2}\right)$. By Lemma \ref{s21}(ii), $b=\frac{vr}{k}=\frac{\omega^{3}(\omega-1)}{2(\omega+2)}=\frac{1}{2}\cdot(\omega^{3}-3\omega^{2}+6\omega-12)+\frac{12}{\omega+2}$, is a positive integer. Thus $(\omega+2)\mid12$ and so $\omega=10$. Then $(y,v,b,r,k,\lambda)=(2,100,375,45,12,5)$.
		
		Similarly, if  $y=3$, then $a_{1}=3$ by \eqref{a1}. We then deduce  that $(v,r,k,\lambda)=\left(\omega^{2},\frac{(2\omega-2)(3\omega-1)}{\omega+17},\frac{3\omega+5}{2},\frac{9\omega-3}{\omega+17}\right)$. Since $\lambda=\frac{9\omega-3}{\omega+17}=9-\frac{156}{\omega+17}$ is a positive integer,  it follows that $(\omega+17)\mid156$, so $\omega=9,22,35,61$ or $139$. However, $k$ is not an integer if $\omega=22$ and  $b=\frac{vr}{k}$ is not an integer if $\omega=9,35,61$ or $139$. Hence  we exclude the case where $y=3$. By using Magma \cite{Bosma}, we do similar calculation for the cases $4\leq y\leq10$ and  obtain all potential parameters $(v,b,r,k,\lambda)$ for  $\mathcal{D}$.  These parameters and the corresponding $\mathrm {Soc}(H)$ are presented in  Table \ref{soc} by \cite[Table B.2 and  B.3]{Dixon}.
		
		\begin{table}[h]
			\centering
			\caption{Potential parameters and $\mathrm{Soc}(H)$ for Proposition \ref{pt}}\label{soc}
			\begin{tabular}{cccc}
				\toprule
				~~~~~~$y$~~~~~~   &   ~~~~~~$a_{1}$~~~~~~  & ~~~~~~$(v,b,r,k,\lambda)$~~~~~~ & ~~~~~~$\mathrm{Soc}(H)$~~~~~~ \\
				\midrule
				2 & 2 & $(10^2,375,45,12,5)$ & $\mathrm{A}_{5}$, $\mathrm{A}_{6}$, $\mathrm{A}_{10}$\\
				5 & 4 & $(21^{2},980,100,45,10)$ & $\mathrm{A}_{7}$, $\mathrm{A}_{21}$, $\mathrm{PSL}(2,7)$,  $\mathrm{PSL}(3,4)$\\
				5 & 4 & $(111^{2},165649,3025,225,55)$ & $\mathrm{A}_{111}$\\
				7 & 5 & $(77^{2},13794,456,196,15)$ &  $\mathrm{A}_{77}$, $\mathrm{M}_{22}$\\
				10 & 6 & $(92^{2},41262,1365,280,45)$ &$\mathrm{A}_{92}$\\
				\bottomrule
			\end{tabular}
		\end{table}
		
		 Let $T=\mathrm{Soc}(H)$ and $\alpha=(\delta,\delta)$ be a point. Suppose that  $T$ is $\mathrm{A}_{n}$ of degree $n$ acting on $\Gamma$ listed in Table \ref{soc}.  Then we see that $2\leq a_1<\lambda\leq \omega-2$. In fact, we have $a_1=|B\cap \Theta_1|$  by \cite[3.3.2]{Zhang2024}, where $\Theta_1=\left(\{\delta\}\times\{\Gamma\setminus\delta\}\right)\cup\left(\{\Gamma\setminus\delta\}\times\{\delta\}\right)$ and   $B$ is a block containing $\alpha$. The flag-transitivity of $G$ implies that $a_1$ is independent of the choice of $B$.  By the proof of \cite[Lemma 3.8]{Zhang2024}, we have $\omega-2\leq a_1$, which is a contradiction.
		 We now deal with the remaining cases. By  \cite[Lemma 3.10]{Tian2013}, we have 
		\begin{equation}\label{r}
			r\mid \frac{2|T|^{2}|\mathrm{Out}(T)|^{2}}{\omega^{2}}. 
		\end{equation}
		We can check that condition \eqref{r} cannot hold if $T=\mathrm{A}_{5},\mathrm{A}_{6}$, $\mathrm{PSL}(2,7)$ or $\mathrm{M}_{22}$ . Thus we only leave the cases where $T=\mathrm{A}_{7}$ or $\mathrm{PSL}(3,4)$. Note that $G$   has no conjugacy class of subgroups with index $980$ if $T=\mathrm{PSL}(3,4)$ by Magma \cite{Bosma}, a contradiction. Thus we suppose that $T=\mathrm{A}_{7}$ and  $(v,b,r,k,\lambda)=(21^{2},980,100,45,10)$. We only give  details for the case $G=\mathrm{A}_{7}\wr \mathrm{S}_{2}$ with the help of Magma \cite{Bosma} and the other cases are similar. We get $G$ by command $\mathbf{PrimitiveGroup(441,4)}$. Since $G$ is flag-transitive, $G$ acts transitively on $\mathcal{B}$. Then $|G_{B}|=|G|/b=12960$ for any $B\in\mathcal{B}$. Using command $\mathbf{Subgroups(G:OrderEqual:=12960)}$, it turns out that $G$ has two conjugacy classes of subgroups  with index $980$, denote by $H_1$, $H_2$ as representatives, respectively. As $G$ acts transitively on $\mathcal{B}$, there exists a block $B$ such that $G_{B}=H_1$ or $H_2$. Using commands $\mathbf{Orbits(H_1)}$ and $\mathbf{Orbits(H_2)}$, we see that there is only one orbit $\Delta$ with size $45$ of $H_1$,  and no orbit with  size $45$ of $H_2$  acting on $\Omega=\{1,2,...,441\}$. Thus we choose the orbit $\Delta$ as the basic block $B_1$. Using command $\mathbf{Design<2,441|B_1^G>}$,  then Magma shows that resulting structure is not a $2$-design. Thus this flag-transitive $2$-design does not exist. This completes the proof.
	\end{proof}
	\noindent\textbf{Proof of Theorem \ref{main}.} It follows immediately from Proposition \ref{tw}, \ref{sd}  and \ref{pt}.
	
	\section{Proof of Theorem \ref{second}}\label{s4}
	
	In this section, under hypothesis \ref{hy1}, we further assume that  $G$ is an almost simple group with $\mathrm{Soc}(G)=\mathrm{A}_{n}$.
	\begin{hypothesis}\label{hy_alternating}
		Let $\mathcal{D}$ be a non-trivial quasi-symmetric $2$-design with block intersection numbers $x=0$ and $2\leq y\leq10$, let $G \leq \mathrm{Aut}(\mathcal{D})$ be flag-transitive  and point-primitive with $\mathrm{Soc}(G)=\mathrm{A}_{n}$. Let $\a$ be a point of $\mathcal{D}$ and $H=G_{\a}$.
	\end{hypothesis}
	
	Since $G$ is point-primitive, $H$ is a maximal subgroup of $G$. We first assume that $n=6$. Then $G\cong \mathrm{M}_{10},\mathrm{PGL}(2,9)$ or $\mathrm{P}\Gamma\mathrm{L}(2,9)$. Each of these groups has exactly three maximal subgroups with index greater than $2$, specifically, with indices $45,36$ and $10$ respectively. 
	From $v=[G:H]$, we can obtain that $v=45,36$ or $10$. Then computation in Magma \cite{Bosma} gives  all potential parameters $(y,v,b,r,k,\lambda)$ of $\mathcal{D}$: $(5,45,66,22,15,7)$ and $(9,36,70,35,18,17)$. However, we can check that $r\nmid |H|$ for these cases, which contradicts the  flag-transitivity of $G$. Thus we have  $G=\mathrm{A}_{n}$ or $\mathrm{S}_{n}$ with $n\geq5$. Let  $\Omega_{n}=\{1,2,\dots,n\}$. By Lemma \ref{max}, the action of $H$ on $\Omega_{n}$ can be one of the following three cases: (i) primitive; (ii) transitive and imprimitive; (iii) intransitive. We analyze each of these actions separately, following similar approaches in \cite{Zhang2023DM,Zhou2015,Delandtsheer2001}.
	
	\subsection{$H$ acts primitively  on $\Omega_{n}$}

	\begin{prop}\label{primi}
		Assume  Hypothesis \ref{hy_alternating}. Then the  point-stabilizer $H=G_{\a}$ cannot act primitively on $\Omega_{n}$.
	\end{prop}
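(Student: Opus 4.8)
The plan is to argue by contradiction: assume $H=G_{\alpha}$ acts primitively on $\Omega_{n}$, show that the pair $(n,H)$ is then confined to a short explicit list, and finally destroy every surviving parameter set with the computational routine of Section \ref{Comp} reinforced by the flag-transitivity divisibility conditions. Throughout I may assume $G=\mathrm{A}_{n}$ or $\mathrm{S}_{n}$ with $n\geq 5$, since the almost simple groups with socle $\mathrm{A}_{6}$ have already been treated in the paragraph preceding this proposition.

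First I would extract an order bound. Flag-transitivity and Lemma \ref{s23}(i) give $|H|^{3}=|G_{\alpha}|^{3}>\lambda|G|$, and since $\lambda>y\geq 2$ by Lemma \ref{s22}(ii) we have in particular $|H|^{3}>|G|$. As $G$ is point-primitive, $H$ is maximal in $G$; being moreover primitive on $\Omega_{n}$, it is a maximal primitive subgroup of $G$ with $|H|^{3}>|G|$, so Lemma \ref{large} applies. This restricts $(n,H)$ to the finite lists recorded there; among the entries for $G=\mathrm{S}_{n}$ the case $H=\mathrm{A}_{n}$ is discarded at once because it yields the trivial degree $v=2$, and $H=\mathrm{S}_{n-1}$ is discarded because it is intransitive, not primitive, on $\Omega_{n}$.

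Next I would compute $v=[G:H]$ for each admissible pair and feed the resulting values into the routine of Section \ref{Comp}, which enumerates all $(v,b,r,k,\lambda)$ satisfying \eqref{y}--\eqref{b}. The small degrees $v=6$ and $v=15$ are expected to produce no tuples at all; in particular the genuine design on $15$ points, $\mathrm{PG}(3,2)$, has $y=1$ and is excluded by the standing hypothesis $y\geq 2$, so the coincidence of $v=15$ with a real example (coming from $(7,\mathrm{PSL}(2,7))$ and $(8,\mathrm{AGL}(3,2))$, and compatible with Corollary \ref{second1}) causes no difficulty here. For the larger degrees $v=120,840,2520$ and the very large values arising from $(13,\mathrm{PSL}(3,3))$, $(15,\mathrm{A}_{8})$, $(16,\mathrm{AGL}(4,2))$, $(24,\mathrm{M}_{24})$ and $(12,\mathrm{PGL}(2,11))$, one lists the arithmetic candidates and then imposes the flag-transitivity constraints $r\mid|H|$ and $r\mid\lambda(v-1,|H|)$ from Lemma \ref{s23}(ii). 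Because $|H|$ is small relative to $v$ in all of these, the divisibility conditions are expected to eliminate every candidate but a handful, and those survivors are dispatched by a direct Magma computation of the admissible block orbits, exactly in the style of the proof of Proposition \ref{pt}. When $v$ is odd the reduction can be shortened further: Lemma \ref{odd} forces $H$ to be intransitive or imprimitive on $\Omega_{n}$ unless $(v,G)=(15,\mathrm{A}_{7}\text{ or }\mathrm{A}_{8})$, and the latter has no feasible parameter set with $y\geq 2$.

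The main obstacle I anticipate is not the reduction step, which is clean, but the endgame over the intermediate degrees: several of them (notably $v=120$) do admit arithmetic parameter tuples, and one must confirm that none supports an actual flag-transitive quasi-symmetric design. The delicate part is checking the divisibility conditions against the precise small orders $|H|$ furnished by Lemma \ref{large}, and, for the last few candidates, carrying out the Magma verification that no $G$-invariant block design with the required parameters exists. Once every pair on the list is exhausted, the hypothesis that $H$ is primitive on $\Omega_{n}$ is contradicted, which proves the proposition.
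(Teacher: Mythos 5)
Your reduction is the same as the paper's at its core: Lemma \ref{s23}(i) gives $|H|^{3}>\lambda|G|>|G|$, point-primitivity makes $H$ maximal, and a primitive $H$ then falls under Lemma \ref{large}; your dismissals of $H=\mathrm{A}_{n}$ (degree $2$) and $H=\mathrm{S}_{n-1}$ (intransitive on $\Omega_{n}$, hence outside the hypothesis) are both legitimate. Where you genuinely diverge is the elimination of the surviving list. The paper first shows $\frac{r}{(r,\lambda)}$ is odd (otherwise $v$ is odd, since $\frac{r}{(r,\lambda)}$ divides $v-1$, and Lemma \ref{odd} then forces $v=15$ with $G=\mathrm{A}_{7}$ or $\mathrm{A}_{8}$, which the parameter search kills), and then imports from the proof of \cite[Proposition 3.1]{Zhang2023DM} the structural fact that $\frac{r}{(r,\lambda)}$ is one of $n-2$, $n-1$, $n$ or $(n-2)n$; combined with Wielandt's bound and Lemma \ref{s22}(iv), this one inequality wipes out all large entries of Lemma \ref{large}, leaving only $v=120$ and $v=840$ (Table \ref{Primitive}) for Magma. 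You skip \cite{Zhang2023DM} entirely and propose degree-by-degree enumeration plus the divisibility conditions of Lemma \ref{s23}(ii). That is logically sound, but in the order you state it ("list the arithmetic candidates and then impose the constraints") it is not executable for the worst entry: for $(24,\mathrm{M}_{24})$ one has $v=24!/(2|\mathrm{M}_{24}|)\approx1.3\times10^{15}$, and the routine of Section \ref{Comp} loops over $k$ up to $v$. You must invert the order: since $v$ is divisible by $2,3,5,7,11$, we get $\gcd(v-1,|\mathrm{M}_{24}|)\mid 23$, so $\frac{r}{(r,\lambda)}\leq 23$ by Lemma \ref{s23}(ii), and Lemma \ref{s22}(iv) gives $v\leq 18\cdot 23^{2}$, an immediate contradiction with no enumeration at all; similarly $(15,\mathrm{A}_{8})$ and $(16,\mathrm{AGL}(4,2))$ die because $\gcd(v-1,|H|)=1$ forces $r\mid\lambda$, contradicting $r>\lambda$, and $(13,\mathrm{PSL}(3,3))$, $(12,\mathrm{PGL}(2,11))$ and the degree-$2520$ cases die the same way. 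Finally, your anticipated "endgame" at $v=120$ does not materialize: by the paper's computation the arithmetic search of Section \ref{Comp} already returns no tuples for $v=120$ and $840$, so no block-orbit construction in the style of Proposition \ref{pt} is needed in this proposition. In short, your route trades the paper's structural constraint on $\frac{r}{(r,\lambda)}$ (which keeps every computation trivial and human-checkable) for heavier reliance on Lemma \ref{s23}(ii) and machine search; it closes, but only once the divisibility bounds are applied \emph{before}, not after, the parameter enumeration.
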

	
	\begin{proof}
		We claim that if  $G=\mathrm{S}_{n} $ then $H$ cannot be $\mathrm{A}_{n}$ or $\mathrm{S}_{n-1}$. First we have $H\neq\mathrm{A}_{n}$ since  $[G: H]=v>2$. Next suppose that  $H=\mathrm{S}_{n-1}$ so that $v=n$. By \cite[Theorem 14.2]{Wielandt1964}, we have $[G: H]=v\geq\left[\frac{n+1}{2}\right]!$ and so $v=n\leq6$. Then Magma \cite{Bosma} shows that there are no feasible parameters for $\mathcal{D}$. 
		
		Assume that $\frac{r}{(r,\lambda)}$ is even, it follows that $v$ is odd. By Lemma \ref{odd}, $v=15$ and $G=\mathrm{A}_{7}$ or $\mathrm{A}_{8}$. However, for $v=15$, computation in Magma \cite{Bosma} shows that there are no feasible parameters for $\mathcal{D}$. Thus $\frac{r}{(r,\lambda)}$ is odd. 
		By the proof of  \cite[Proposition 3.1]{Zhang2023DM} (also see \cite{Zhou2015,Delandtsheer2001}),  we have  that $\frac{r}{(r,\lambda)}$ is either a prime, namely $n-2$, $n-1$ or $n$, or the product of two primes, namely  $(n-2)n$. Moreover, since  $H$ acts primitively on $\Omega_{n}$ and $H\ngeq A_{n}$, we have $v\geq\frac{\left[\frac{n+1}{2}\right]!}{2}$ by \cite[Theorem 14.2]{Wielandt1964}. By Lemma \ref{s22}(iv), we deduce that 
		\begin{equation}\label{n_value}
			\frac{\left[\frac{n+1}{2}\right]!}{2}\leq v\leq2(y-1)\cdot\frac{r^2}{(r,\lambda)^2}\leq 18\cdot\frac{r^2}{(r,\lambda)^2}.
		\end{equation}
		On the other hand, according to Lemma \ref{s23}(i),  we have $|G_{\alpha}|^{3}>\lambda|G|>|G|$. Therefore, we only need to consider the cases in Lemma \ref{large}. We examine whether \eqref{n_value} holds for the cases in Lemma \ref{large} and determine  the feasible values of  $v$ and $\frac{r}{(r, \lambda)} $ as shown  in Table \ref{Primitive}. %Note that we have shown that $v>6$ and $v\neq15$. %
		For the values of $v$ in Table \ref{Primitive}, computation  in Magma \cite{Bosma} shows that there are no feasible parameters for $\mathcal{D}$. This completes the proof.

		\begin{table}[htb]
			\centering
			\caption{The feasible values of $v$ and $\frac{r}{(r,\lambda)}$ in Proposition \ref{primi}}\label{Primitive}
			\begin{tabular}{ccccc}
				\toprule
				~~~$v$~~~ & ~~~ $n$ ~~~& ~~~ $\frac{r}{(r,\lambda)}$~~~  &  ~~~$G$~~~  &~~~ $H$~~~\\
				\midrule
				120 & 7 &  5,7,35 &  $\mathrm{S}_{n}$ & $\mathrm{AGL}(1,7)$\\ 
				120 & 8 &  7 &  $\mathrm{S}_{n}$ & $\mathrm{PGL}(2,7)$\\     
				120 & 9 & 7 &  $\mathrm{A}_{n}$ & $\mathrm{P}\Gamma\mathrm{L}(2,8)$ \\
				840 & 9 &  7 &  $\mathrm{A}_{n}$ & $3^{2}.\mathrm{SL}(2,3)$\\ 
				840 & 9 &  7 &  $\mathrm{S}_{n}$ & $\mathrm{AGL}(2,3)$\\       
				\bottomrule
			\end{tabular}
		\end{table}
	\end{proof}
	
	\subsection{$H$ acts transitively and imprimitively  on $\Omega_{n}$}
	
	\begin{prop}
		Assume  Hypothesis  \ref{hy_alternating}. Then the  point-stabilizer  $H=G_{\a}$ cannot act transitively and imprimitively on $\Omega_{n}$.
	\end{prop}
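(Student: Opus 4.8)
The plan is to invoke Lemma \ref{max}(ii), which in this case forces $H=(\mathrm{S}_{l}\wr \mathrm{S}_{m})\cap G$ with $n=lm$ and $l,m\geq2$. Since $\mathrm{S}_{l}\wr \mathrm{S}_{m}$ contains odd permutations (for instance a transposition inside one block), its intersection with $\mathrm{A}_{n}$ has index $2$ in it, so the index $v=[G:H]=\frac{(lm)!}{(l!)^{m}m!}$ is the same whether $G=\mathrm{A}_{n}$ or $\mathrm{S}_{n}$. Concretely, $G$ acts on the set of all partitions of $\Omega_{n}$ into $m$ blocks of size $l$, and $v$ is the number of such partitions.

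The key step is to exhibit a small non-trivial subdegree of $G$ and feed it into Lemma \ref{s23}(iii). Fixing the base partition $P_{0}$ stabilised by $H$, I would consider the partitions obtained from $P_{0}$ by exchanging a single point of one block with a single point of another block. These partitions form a union of non-trivial $H$-orbits of total size at most $\binom{m}{2}l^{2}$, so $G$ has a non-trivial subdegree $d\leq \binom{m}{2}l^{2}$. By Lemma \ref{s23}(iii) this gives $\frac{r}{(r,\lambda)}\mid d$, whence $\frac{r}{(r,\lambda)}\leq \binom{m}{2}l^{2}$.

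Combining this with Lemma \ref{s22}(iv) and $y\leq10$ yields
\begin{equation}
\frac{(lm)!}{(l!)^{m}m!}=v\leq 2(y-1)\cdot\frac{r^{2}}{(r,\lambda)^{2}}\leq 18\binom{m}{2}^{2}l^{4}.
\nonumber
\end{equation}
The left-hand side grows super-polynomially in $n=lm$ while the right-hand side is polynomial in $l$ and $m$, so this inequality can hold only for finitely many pairs $(l,m)$. A direct check shows that $lm$ is then small (for instance $l=2$ forces $m\leq6$ and $l=3$ forces $m\leq4$, and in all cases $n\leq20$), leaving a short explicit list of admissible values of $v$.

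Finally, for each surviving $v$ I would run the Computational method of Section \ref{Comp} to produce all candidate parameter tuples $(v,b,r,k,\lambda)$, and then discard them using the flag-transitivity constraints $r\mid|H|$ (Lemma \ref{s23}(ii)) and $\frac{r}{(r,\lambda)}\mid d$, together with an explicit Magma check for the existence of a genuine $2$-design admitting $G$, exactly as in the proof of Proposition \ref{pt}; I expect every case to be eliminated. The main obstacle is the correct determination of the subdegree, namely verifying that the swap partitions really constitute $H$-orbits and bounding their number, and handling the mild complication that for $G=\mathrm{A}_{n}$ this orbit may split into two; once this is in place the factorial-versus-polynomial comparison and the Magma verification are routine.
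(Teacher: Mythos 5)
Your proposal is correct and takes essentially the same approach as the paper: the paper likewise identifies $\cP$ with the set of partitions of $\Omega_{n}$ into $t$ blocks of size $s$, uses the $2$-cyclic (i.e.\ single-point swap) partitions as a union of non-trivial $H$-orbits to get $\frac{r}{(r,\lambda)}\mid d_{2}$ with $d_{2}\leq s^{2}\binom{t}{2}$, combines this with Lemma \ref{s22}(iv) to force a finite list of pairs (indeed $n\leq 20$), and eliminates all surviving parameter tuples by Magma. The only difference is cosmetic: the paper treats $s=2$ and $s\geq 3$ separately with the exact subdegrees $2^{j-1}\binom{t}{j}$ and $s^{j}\binom{t}{j}$ (citing known orbit descriptions), while you work with an upper bound on the orbit size throughout, which suffices.
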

	
	\begin{proof}
		Suppose to the contrary that $\Sigma=\{\Delta_{0},\Delta_{1},\dots,\Delta_{t-1}\}$ is a non-trivial partition of $\Omega_{n}$ preserved by $H$, where $|\Delta_{i}|=s$, $0\leq i \leq t-1$, $s,t\geq2$ and $st=n$. Then
		\begin{equation}\label{vnumber}
			v=\binom{ts-1}{s-1}\binom{(t-1)s-1}{s-1}\dots\binom{3s-1}{s-1}\binom{2s-1}{s-1}.
		\end{equation}
		Moreover, the set $O_{j}$ of $j$-cyclic partitions with respect to $X$(a partition of $\Omega_{n}$ into $t$ classes each of size $s$) is a union of orbits of $H$ on $\cP$ for $j=2,\dots,t$ (see \cite{Delandtsheer2001,Zhou2010} for definition and details).
		
		\textbf{Case(1)}: Suppose first that $s=2$. Then $t\geq3$, $v=(2t-1)(2t-3)\dots5\cdot3$, and
		\begin{equation}
			d_{j}=|O_{j}|=\frac{1}{2}\binom{t}{j}\binom{s}{1} ^{j}=2^{j-1}\binom{t}{j}.
			\nonumber
		\end{equation}
		By Lemma \ref{s23}(iii), we have $\frac{r}{(r,\lambda)}\mid d$, where $d$ is any non-trivial subdegree of $G$. Thus by Lemma \ref{s22}(iv), we have
		\begin{equation}
			v\leq2(y-1)\cdot\frac{r^{2}}{(r,\lambda)^{2}}\leq2(y-1)d^{2}. \nonumber
		\end{equation}
		If $t\geq7$, then we can check that $v=(2t-1)(2t-3)\dots5\cdot3>18t^{2}(t-1)^{2}\geq2(y-1)d_2^2\geq v$, a contradiction. Thus $t\leq6$ and so $v=15,105,945$ or $10395$.  Then Magma \cite{Bosma} shows that there are no feasible parameters for $\mathcal{D}$.

		\textbf{Case(2)}: Now we consider that $s\geq3$, then $O_{j}$ is an orbit of $H$ on $\cP$, and $d_{j}=|O_{j}|=\binom{t}{j}\binom{s}{1} ^{j}=s^{j}\binom{t}{j}$. Thus we have $\frac{r}{(r,\lambda)}\leq d_{2}=s^{2}\binom{t}{2}$ by Lemma \ref{s23}(iii). Moreover, since  $\binom{is-1}{s-1}=\frac{is-1}{s-1}\cdot\frac{is-2}{s-2}\dots \frac{is-(s-1)}{1}>i^{s-1}$ for $i=2,3,\dots,t$, we can deduce that $v>(t!)^{s-1}$. Then by Lemma \ref{s22}(iv), 
		
		\begin{equation}\label{st_bound}
			(t!)^{s-1}<v\leq2(y-1)\cdot\frac{r^2}{(r,\lambda)^2}\leq 2(y-1)d_{2}^{2}\leq9s^{4}t^{2}(t-1)^{2}/2.
		\end{equation}
		By using Magma \cite{Bosma}, we get $31$ pairs $(t,s)$ satisfying \eqref{st_bound} as  follows: $(t=2,3\leq s\leq 23),(t=3,3\leq s\leq 8),(t=4,3\leq s\leq 5)$ and $(t=5,s=3)$. Furthermore, we substitute these pairs $(t,s)$ into  \eqref{vnumber} and verify  the inequality $v\leq9s^{4}t^{2}(t-1)^{2}/2$ holds. Then we get  $11$ pairs $(t,s)$:   $(t=2, 3\leq s\leq 10),(t=3,3\leq s\leq 4),(t=4,s=3)$. Combining \eqref{vnumber} with the condition that $\frac{r}{(r,\lambda)}$ divides $d_{2}=s^{2}\binom{t}{2}$, computation  in Magma \cite{Bosma} shows that there are no feasible parameters for $\mathcal{D}$. This completes the proof. 
	\end{proof}
	
	\subsection{$H$ acts intransitively on $\Omega_{n}$}
	\begin{prop}\label{intransitive}
		Assume  Hypothesis  \ref{hy_alternating}. Then the  point-stabilizer  $H=G_{\a}$ cannot act intransitively on $\Omega_{n}$.
	\end{prop}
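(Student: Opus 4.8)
Following the pattern established in the two preceding propositions, I would set $H=G_\alpha$ acting intransitively on $\Omega_n=\{1,\dots,n\}$. By Lemma \ref{max}(i) (and Lemma \ref{odd}(i) when $v$ is odd), the maximal intransitive subgroup has the form $H=(\mathrm{S}_a\times\mathrm{S}_{n-a})\cap G$ for some $1\le a<n/2$. The point set $\cP$ is then identified with the $a$-subsets of $\Omega_n$, so that
\begin{equation}\label{intrans_v}
	v=\binom{n}{a}.
\end{equation}
The plan is to exploit the subdegree structure of this action: two $a$-subsets $X,Y$ lie in the same $H$-orbit precisely when $|X\cap Y|$ is fixed, so the nontrivial subdegrees are
\begin{equation}\label{intrans_d}
	d_i=\binom{a}{i}\binom{n-a}{a-i},\qquad 0\le i\le a-1.
\end{equation}
By Lemma \ref{s23}(iii), $\tfrac{r}{(r,\lambda)}$ divides every nontrivial subdegree, and by Lemma \ref{s22}(iv) we obtain $v\le 2(y-1)\cdot\tfrac{r^2}{(r,\lambda)^2}\le 18\,d^2$ for any such $d$. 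Choosing the smallest convenient subdegree (typically $d_{a-1}=a(n-a)$, coming from $a$-subsets sharing $a-1$ points) gives the key inequality
\begin{equation}\label{intrans_bound}
	\binom{n}{a}=v\le 18\,a^2(n-a)^2.
\end{equation}

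**Bounding the parameters and the first subcase.** The main work is to show that \eqref{intrans_bound} forces $a$ and $n$ into a finite, small range. First I would dispose of $a=1$: then $H=(\mathrm{S}_1\times\mathrm{S}_{n-1})\cap G$ is the stabilizer of a point of $\Omega_n$, $v=n$, and this is exactly the $\mathrm{S}_{n-1}$ situation already excluded at the start of the subsection (via the Wielandt bound $v\ge\lfloor(n+1)/2\rfloor!$), so $a\ge2$. For $a\ge2$, the left-hand side of \eqref{intrans_bound} grows super-polynomially in $n$ while the right-hand side is quartic, so for each fixed $a$ there is an explicit upper bound on $n$; since $a<n/2$ this also caps $a$. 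Concretely, $\binom{n}{2}\le 18\cdot4(n-2)^2$ already restricts $n$ sharply when $a=2$, and larger $a$ are even more constrained. I would run this search in Magma following Section \ref{Comp}, producing a short finite list of pairs $(n,a)$, hence a finite list of candidate $v$.

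**Eliminating the survivors and the expected obstacle.** For each surviving pair $(n,a)$ I would compute $v=\binom{n}{a}$ and feed it through the computational routine of Section \ref{Comp} to enumerate all parameter tuples $(v,b,r,k,\lambda)$ satisfying \eqref{y}--\eqref{b}, then impose the divisibility constraint that $\tfrac{r}{(r,\lambda)}$ divides each subdegree $d_i$ in \eqref{intrans_d}. I expect this to eliminate almost every candidate, with any stubborn residual cases (where feasible arithmetic parameters do survive) settled directly in Magma by constructing $G$, locating the block stabilizer $G_B$ of index $b$, testing its orbits for a candidate base block of size $k$, and checking whether the resulting orbit yields a genuine $2$-design --- exactly the endgame used in the proof of Proposition \ref{pt}. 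The main obstacle I anticipate is controlling the small-$a$ cases: when $a=2$ or $3$ the inequality \eqref{intrans_bound} is satisfied by a comparatively long list of $n$, so the arithmetic elimination must be done carefully, and some of these $v$ (for instance those near the $\binom{n}{2}$ triangular values) may admit feasible $(v,b,r,k,\lambda)$ that only the finer subdegree-divisibility condition or an explicit Magma construction rules out. Away from these, the rapid growth of $\binom{n}{a}$ makes the elimination immediate.
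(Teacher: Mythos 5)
Your framework is the paper's own (identify $\cP$ with the $a$-subsets of $\Omega_n$, so $v=\binom{n}{a}$ with subdegrees $\binom{a}{i}\binom{n-a}{a-i}$, then apply Lemmas \ref{s23}(iii) and \ref{s22}(iv) to the subdegree $a(n-a)$), but the finite-search strategy you hang on it fails at exactly the two cases where the paper has to do real work. The fatal one is $a=2$: your claim that $\binom{n}{2}\le 18\cdot 4(n-2)^2$ ``restricts $n$ sharply'' is false, since $n(n-1)/2\le 72(n-2)^2$ holds for \emph{every} $n\ge 5$ (both sides are quadratic in $n$ and the right-hand constant wins). So the candidate list is infinite and no Magma enumeration over pairs $(n,a)$ can terminate; replacing the single subdegree by the gcd of the two subdegrees does not change the asymptotics. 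The paper closes this infinite family by arithmetic rather than search: combining Lemma \ref{s21}(i) with Lemma \ref{s22}(i), it traps $k-1=(v-1)\lambda/r$ strictly between $(y-1)\frac{r}{\lambda}$ and $2(y-1)\frac{r}{\lambda}$, deduces $\frac{r}{\lambda}=\frac{2(n-2)}{u}$ with $3\le u\le 11$, and then for each $u$ expresses $(b,r,\lambda)$ as rational functions of $n$, so that integrality conditions such as $(n+2)\mid 72$ (when $u=4$) or $(2n+3)\mid 945$ (when $u=8$), together with $y\mid k$, leave finitely many $n$, each of which dies (typically because $b=v$, contradicting non-symmetry). Some substitute for this step is indispensable; without it the proof does not close.

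The case $a=1$ is also unrepaired. You cannot cite the Wielandt bound: \cite[Theorem 14.2]{Wielandt1964} bounds the index of \emph{primitive} subgroups not containing $\mathrm{A}_n$, whereas $(\mathrm{S}_1\times\mathrm{S}_{n-1})\cap G$ is intransitive on $\Omega_n$ and has index exactly $n$; the paper's appeal to Wielandt occurs inside Proposition \ref{primi}, whose hypothesis ($H$ primitive on $\Omega_n$) does not cover this stabilizer at all. Your inequality is vacuous here ($n\le 18(n-1)^2$ always holds), and the action on $\cP$ has rank $2$, so subdegree divisibility only repeats Lemma \ref{s23}(ii). The paper's actual argument is of a different kind: with $v=n$, the group $G\ge \mathrm{A}_n$ is $(v-2)$-transitive, hence $k$-transitive on $\cP$ since $4\le k\le v-2$, so a nonempty $G$-invariant block set must consist of \emph{all} $k$-subsets, giving $b=\binom{n}{k}$ and contradicting $b\le v(v-1)/k$ from Lemma \ref{s22}(ii). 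Two smaller discrepancies: for $3\le a\le 9$ the paper sharpens the search with the gcd of the two subdegrees $\binom{n-a}{a}$ and $a(n-a)$ (your single-subdegree bound alone allows $n$ near $10^3$ when $a=3$), and its one surviving parameter set $(v,b,r,k,\lambda)=(56,210,45,12,9)$ is eliminated by quoting the known nonexistence of a quasi-symmetric $2$-$(56,12,9)$ design with intersection numbers $0$ and $3$ \cite{Munemasa2020}, not by a block-orbit construction in Magma.
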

	
	\begin{proof}
		Suppose that $H$ acts intransitively on $\Omega_{n}$. Then  $H=(\mathrm{S}_{l}\times \mathrm{S}_{m})\cap G$ by Lemma \ref{max}(i), where $n=l+m$. Without loss of generality, we may assume that $l<m$. By the flag-transitivity of $G$, $H$ is transitive on the blocks trough $\a$ and so $H$ fixes exactly one point in $\cP$. Since $H$ stabilizes only one $l$-subset of $\Omega_{n}$, we can identify the point $\a$ with  this $l$-subset. Then $\cP$ can be identified with the set of $l$-subsets of $\Omega_{n}$. So $v=\binom{n}{l}$, $G$ has rank $l+1$ and the subdegrees are:
		\begin{equation}
			d_{0}=1, d_{i+1}=\binom{l}{i} \binom{n-l}{l-i}, i=0,1,2,\dots,l-1.
			\nonumber
		\end{equation}
		By considering the subdegree $d_l=lm$, we can deduce that 
		\begin{equation}\label{v_l}
			v=\binom{l+m}{l}\leq2(y-1)\cdot\frac{r^{2}}{(r,\lambda)^{2}}\leq2(y-1)l^{2}m^{2}\leq18l^{2}m^{2}.
		\end{equation}
		Here, we have used Lemma \ref{s23}(iii) and \ref{s22}(iv).
		If $m>l\geq10$, then \begin{align*}
			\binom{l+m}{l}=&\frac{(m+l)(m+l-1)\dots (m+1)}{l!}\\=&(1+m)\left(1+\frac{m}{2}\right)\dots \left(1+\frac{m}{l}\right)\\ \geq& (1+m)\left(1+\frac{m}{2}\right)\dots \left(1+\frac{m}{10}\right)>18m^{4}>18l^{2}m^{2},
		\end{align*} which contradicts  \eqref{v_l}. Thus $l\leq9$.
		
		\textbf{Case(1)}: If $l=1$, then $v=n\geq5$ and the subdegrees are $1,n-1$. Since $4\leq k \leq v-2$ and $G$ is $(v-2)$-transitive on $\cP$, we have $G$ acts $k$-transitively on $\cP$. Then $\binom{n}{k}=b\leq v(v-1)/k=n(n-1)/k$ by Lemma \ref{s22}(ii), which is impossible for $k\geq4$.
		
		\textbf{Case(2)}: If $l=2$, then $v=\frac{n(n-1)}{2}$ and subdegrees are $1,\binom{n-2}{2},2(n-2)$. By Lemma \ref{s23}(iii), $\frac{r}{(r,\lambda)}$ divides $\left(2(n-2),\binom{n-2}{2}\right)=\frac{n-2}{2},n-2$ or $2(n-2)$ with $n\equiv 0\pmod{2},n\equiv 1\pmod{4}$ or $n\equiv 3\pmod{4}$ respectively. Thus we have $\frac{r}{\lambda}=\frac{r}{(r,\lambda)}/\frac{\lambda}{(r,\lambda)}=\frac{2(n-2)}{u}$, where $u$ is a positive integer. Moreover, $n\equiv 3\pmod{4}$ if $u$ is odd.  Note that
		\[(y-1)\cdot\frac{r}{\lambda}<k-1=(y-1)\cdot\frac{r-1}{\lambda-1}<2(y-1)\cdot\frac{r}{\lambda}\]
		and $k-1=(v-1)\cdot\frac{\lambda}{r}$ by Lemma \ref{s21}(i) and \ref{s22}(i). Then we obtain 
		\[(y-1)\cdot\frac{r}{\lambda}<k-1=(v-1)\cdot\frac{\lambda}{r}<2(y-1)\cdot\frac{r}{\lambda},\]
		and so
		\begin{equation}\label{r_la}
			(y-1)\cdot\frac{r^2}{\lambda^2}<\frac{(n-2)(n+1)}{2}<2(y-1)\cdot\frac{r^2}{\lambda^2}.
		\end{equation}
		Suppose that $\frac{r}{\lambda}\geq n-2$. Then  $\frac{(n-2)(n+1)}{2}>(y-1)\cdot\frac{r^2}{\lambda^2}\geq
		(n-2)^2$ by \eqref{r_la}, a contradiction. Hence we have $\frac{r}{\lambda}=\frac{2(n-2)}{u}$ and so $k=\frac{(n+1)u}{4}+1$, where $u\geq3$. Furthermore, we have $u^2<4(y-1)\cdot\frac{4(n-2)}{n+1}<16(y-1)\leq 144$ by \eqref{r_la}, and so $3\leq u\leq11$. Here, we detail the case where 
		$u$ is even and the odd case follows similarly. Suppose that $u$ is even. Then $u=4,6,8$ or $10$.
		%(a) If $n\equiv0\pmod{2}$, then $\frac{r}{(r,\lambda)}\mid \frac{n-2}{2}$. Let $\frac{r}{(r,\lambda)}\cdot u=\frac{n-2}{2}$, where $u$ is a positive integer . By Lemma \ref{s23}(iii), we have $\frac{n(n-1)}{2}=v\leq2(y-1)\frac{r^{2}}{(r,\lambda)^{2}}\leq\frac{9(n-2)^2}{2u^2}$, and so  $ u^{2}\leq\frac{9(n-2)^{2}}{n(n-1)}<9$. Then we can obtain that $u=1$ or $2$.
		
		(a) If $u=4$, then  $k=n+2$  and $3\leq y \leq 4$ from \eqref{r_la}. Furthermore, if $y=4$, then $n<8$, which implies that $n=6$ since $y$ divides $k$. Then $v=15$, which is impossible by the proof of Proposition \ref{primi}. Thus we have $y=3$.   By considering the following equations:
		\begin{gather}
			vr=bk,\nonumber\\
			r(k-1)=\lambda(v-1),\nonumber\\
			(y-1)(r-1)=(k-1)(\lambda-1),\nonumber
		\end{gather}
		we can deduce that  $(b,r,\lambda)=\left(\frac{n(n-1)^2(n-2)}{12(n+2)},\frac{(n-1)(n-2)}{6},\frac{n-1}{3}\right)$. As $b$ is a positive integer, it follows that $n+2$ divides $72$. By  checking each possible value of $n$, we get the unique integer solution: $(y,n,v,b,r,k,\lambda)=(3,10,45,45,12,12,3)$. However, $b=v$ which is impossible.
		
		(b) If $u=6$, then  $k=\frac{3(n+1)}{2}+1$  and $4\leq y \leq 8$ from \eqref{r_la}. Furthermore, $n<29$ if $y=6$; $n<11$ if $y=7$; $n=5$ if $y=8$. For the cases $y\geq 6$, we easily check that there are no feasible parameters for $\mathcal{D}$. We only give   details for the case $y=4$ since the other case is similar. Suppose that $y=4$. Then we deduce that $\lambda=\frac{3n-3}{n+7}=3-\frac{24}{n+7}$ similar to the case (a). Hence $n=17$ and so $k=28,r=10,v=136$, which implies $b$ is not an integer: a contradiction.

		(c) If $u=8$, then  $k=2n+3$  and $6\leq y\leq 10$ from \eqref{r_la}.  Furthermore, we have $y=7$ or $9$ from $y\mid k$.  If $y=7$, then we can deduce that $\lambda=4+\frac{32}{n-10}$ and so $n=12,14,18,26$ or $42$, which contradicts  that $y=7$ divides $k=2n+3$. If $y=9$, then we deduce that  $(b,r,\lambda)=\left(\frac{n(n-1)(n-2)(n-3)}{24(2n+3)},\frac{(n-2)(n-3)}{12},\frac{n-3}{3}\right)$. As $b$ is a positive integer, it follows that $2n+3$ divides $945$. By checking each possible value of $n$, we get the unique integer solution: $(y,n,v,b,r,k,\lambda)=(9,30,435,435,63,63,9)$. However, $b=v$ which is impossible.

		(d) If $u=10$, then $k=\frac{5(n+1)}{2}+1$  and $8\leq y\leq 10$ from \eqref{r_la}.  We only give   details for the case $y=8$ since  other cases are similar.   If $y=8$, then we can deduce that $\lambda=\frac{5(5n-9)}{11n+53}<3<y$, which is impossible.
		
		\textbf{Case(3)}: Suppose that $3\leq l\leq9$. Recall that $m$ needs to satisfy the inequality $\binom{l+m}{l}\leq2(y-1)l^{2}m^{2}$ by \eqref{v_l}. Therefore, given specific values of $y$ and $l$, we can determine the range of $m$ and subsequently the range of 
		$v$. Furthermore, when $v$ takes a fixed value, we can calculate the Greatest Common Divisor of two non-trivial subdegrees  $a:=(d_1,d_l)$, where  $d_1=\binom{m}{l}$ and $d_l=lm$. Then we verify whether the inequality $v\leq2(y-1)\cdot\frac{r^2}{(r,\lambda)^2}\leq2(y-1)a^2$ holds. Finally, by considering the  Computational method described  in Section \ref{Comp}, we can obtain the unique potential parameters: $(y,n,v,b,r,k,\lambda)=(3,8,56,210,45,12,9)$ by Magma \cite{Bosma}. By \cite[Theorem 7]{Munemasa2020}, a quasi-symmetric $2$‐$(56, 12, 9)$ design with block intersection numbers $0$ and $3$ does not exist. In fact, according  to  \cite[Table 2]{Tang2022}, there does not exist flag-transitive point-primitive non-symmetric $2$-$(56,12,9)$ design with an almost simple automorphism group. Thus we rule out this case. This completes the proof. 
	\end{proof}
	
	\noindent\textbf{Proof of Theorem \ref{second}.} It follows immediately from Proposition \ref{primi}-\ref{intransitive}.
	
	\section{Proof of Theorem \ref{third}}\label{s5}
	
	Let  $T$  be an arbitrary  sporadic simple group.  Then  $\mathrm{Aut}(T)=T$ or $\mathrm{Aut}(T)=T:2$ and so $G=T$ or $T:2$ by \cite{Conway1985}. The lists of maximal subgroups of $T$ and $\mathrm{Aut}(T)$ are given   in \cite {Conway1985} (also see \cite{Wilson2017}) except for the Monster group $\mathrm{M}$. The possible  maximal subgroups $H$ of $\mathrm{M}$, which are
	not listed in \cite{Conway1985} are given in \cite{Bray2006}. They are of the form $N\unlhd H\leq\mathrm{Aut}(N)$, where $N$ is isomorphic to one of the following simple groups: $\mathrm{PSL}(2,13),\mathrm{PSU}(3,4),\mathrm{PSU}(3,8),\mathrm{Sz}(8)$.
	It is easily shown that $|H|^3\leq|\mathrm{Aut}(N)|^3<|M|$ for each case, which
	contradicts Lemma \ref{s23}(i). Hence, $N$
	can be ruled out.
	
	For remaining almost simple groups $G$ and their maximal subgroups $G_\alpha$. Let $r_ {max}=(v-1, G_{\alpha})$. Then from Lemma \ref{s23}(ii), we have  $\frac{r}{(r,\lambda)}\mid r_{max}$. Combining this  with Lemma \ref{s22}(iv), we obtain  that $v \leq2(y-1)\cdot\frac{r^{2}}{(r,\lambda)^{2}}\leq 18r_{max}^{2} $. Thus we first obtain pairs $(G,G_\alpha)$  that satisfy  $|G_ {\alpha}|^ {3}>\lambda |G|\geq2| G |$ and $[G:G_{\alpha}]=v\leq 18r_{max}^{2}$. Then, for these pairs, we compute all possible parameters $(y,v,b,r,k,\lambda)$ that satisfy the conditions \eqref{y}-\eqref{b} given in Section \ref{Comp}. With the help of Magma \cite{Bosma}, we obtain four potential parameters $(y,v,b,r,k,\lambda)$ as shown in Table \ref{sporadic}.
	
	\begin{table}[htb]
		\centering
		\caption{Potential parameters for $\mathcal {D} $}\label{sporadic}
		\begin{tabular}{ccccc}
			\toprule
			Case  &   $(y,v,b,r,k,\lambda)$  &  $\frac{r}{(r,\lambda)}$ &  $r_{max}$  &   $(G,G_\alpha)$  \\
			\midrule
			$(1)$ & $(3,12,22,11,6,5)$ & $11$ & $11$ & $(\mathrm{M}_{11},\mathrm{PSL}(2,11)),(\mathrm{M}_{12},\mathrm{M}_{11})$\\
			$(2)$ & $(2,22,77,21,6,5)$ & $21$ & $21$ & $(\mathrm{M}_{22},\mathrm{PSL}(3,4)),(\mathrm{M}_{22}:2,\mathrm{PSL}(3,4):2_2)$\\
			$(3)$ & $(2,24,184,46,6,10)$ & $23$ & $23$ & $(\mathrm{M}_{24},\mathrm{M}_{23})$\\
			$(4)$ & $(6,24,46,23,12,11)$ & $23$ & $23$ & $(\mathrm{M}_{24},\mathrm{M}_{23})$\\
			\bottomrule
		\end{tabular}
	\end{table}

	For case $(3)$, we see that  $y=2$ and $\lambda \nmid r$.  Then it can be concluded from \cite[Theorem 2]{Zhang2023} that this case does not exist. For cases $(1),(2)$ and $(4)$, we see that  $(r,\lambda)=1$. Then by \cite[Theorem 1]{Zhan2016},  cases $(1)$ and $(2)$  exist, but case $(4)$ does not, i.e., $\mathcal{D}$ is the unique $2$-$(12,6,5)$ design with block intersection numbers $0,3$ and $G=\mathrm{M}_{11}$, or $\mathcal{D}$ is the unique $2$-$(22,6,5)$ design with block intersection numbers $0,2$ and  $G= \mathrm{M}_{22}$ or $\mathrm{M}_{22}:2$. For  specific constructions of $\mathcal{D}$,  see \cite[3.3.1 and 3.3.2]{Zhan2016}. This completes the proof of Theorem \ref{third}. 
	
	\section{Declaration of competing interest}
	
	The authors declare that they have no known competing financial interests or personal relationships that could have appeared to influence the work reported in this paper.
	
	\section{Acknowledgments}
	We would like to thank the anonymous referees whose helpful comments have improved
	this paper.
	\vspace*{10pt}

	\begin{center}
		\scriptsize
		\setlength{\bibsep}{0.5ex}  % vertical spacing between references
		\linespread{0.5}
		%\bibliography{PGL}
		\bibliographystyle{plain}

	\end{center}
	
\end{document}